\definecolor{DimGray}{rgb}{0.41, 0.41, 0.41}
\definecolor{zzttqq}{rgb}{0.6,0.2,0.}
\definecolor{uuuuuu}{rgb}{0.26666666666666666,0.26666666666666666,0.26666666666666666}
\definecolor{ududff}{rgb}{0.30196078431372547,0.30196078431372547,1.}
\definecolor{xdxdff}{rgb}{0.49019607843137253,0.49019607843137253,1.}
\theoremstyle{plain}
\newtheorem{theorem}{Theorem}[section]
\newtheorem{proposition}[theorem]{Proposition}
\newtheorem*{theorem*}{Theorem}
\newtheorem{lemma}[theorem]{Lemma}
\newtheorem{corollary}[theorem]{Corollary}
\theoremstyle{definition}
\newtheorem{defin}[theorem]{Definition}
\newtheorem{ex}[theorem]{Example}
\theoremstyle{remark}
\newtheorem{rem}[theorem]{Remark}
\newcommand{\ord}{\operatorname{ord}}
\title[Value set of modules]{Curve singularities with one Puiseux pair and value sets of modules over their local rings}
\author[M. Alberich-Carrami\~{n}ana]{Maria Alberich-Carrami\~{n}ana}
\author[P. Almir\'on]{Patricio Almir\'on}
\author[J.J. Moyano-Fern\'andez]{Julio-Jos\'e Moyano-Fern\'andez}
\subjclass[2020]{Primary: 14H20, 20M12; Secondary: 13N05,13C12,14B07.}
\keywords{R-modules, \(\Gamma\)-semimodules, curve singularities, moduli, value sets}
\thanks{The first author is supported by the grant PID2019-103849GB-I00 funded by MCIN/ AEI /10.13039/501100011033. The second author is supported by Spanish Ministerio de Ciencia, Innovaci\'{o}n y Universidades PID2020-114750GB-C32 and by the IMAG–Maria de Maeztu grant CEX2020-001105-M / AEI /
	10.13039/501100011033, through a postdoctoral contract. The third author was partially supported by
	MCIN/AEI/10.13039/501100011033 and by "ERDF -- A way of making Europe", grants PGC2018-096446-B-C22 and RED2018-102583-T, as well as by Universitat Jaume I, grant UJI-B2021-02.}
\address{Institut de Rob\`otica i Inform\`atica Industrial (IRI, CSIC-UPC)\\
Departament de Matemàtiques \& Institut de Matemàtiques de la UPC-BarcelonaTech (IMTech), Universitat Politècnica de Catalunya · {BarcelonaTech}, %\\ \vskip 0.5mm 
Av. Diagonal 647, 08028 Barcelona, Spain.}
\email{maria.alberich@upc.edu}
\address{Instituto Universitario de Matemáticas (IMAG), Universidad de Granada,
C/ Ventanilla, 11, 18071, Granada, Spain.}
\email{patricioalmiron@ugr.es}
\address{Universitat Jaume I, Campus de Riu Sec, Departamento de Matem\'aticas \& Institut Universitari de Matem\`atiques i Aplicacions de Castell\'o, 12071
Caste\-ll\'on de la Plana, Spain.}
\email{moyano@uji.es}
\begin{document}

\begin{abstract}
In this paper we characterize the value set $\Delta$ of the \(R\)-modules of the form \(R+zR\) for the local ring \(R\) associated to a germ $\xi$ of an irreducible plane curve singularity with one Puiseux pair. In the particular case of the module of K\"ahler differentials attached to $\xi$, we recover some results of Delorme. From our characterization of $\Delta$ we introduce a proper subset of semimodules over the value semigroup of the ring \(R\). Moreover, we provide a geometric algorithm to construct all possible semimodules in this subset for a given value semigroup.
\end{abstract}

\maketitle

\section{Introduction}

Let \((C,0)\) be an irreducible plane curve singularity  with any number of Puiseux pairs defined by a germ of function \(f.\) Associated to the local ring \(R:=\mathbb{C}\{x,y\}/(f)\) of \((C,0),\) one has naturally a discrete valuation \(v:R\rightarrow\mathbb{N}\cup\{\infty\}.\) Moreover, the value set of the local ring of the curve \mbox{\(\Gamma(C):=v(R)=\langle \overline{\beta}_0,\dots,\overline{\beta}_g\rangle\)} has the structure of numerical semigroup, and ---more remarkably--- it is known to be a topological invariant of the singularity \cite{zariski}. If we set \(n_i:=\gcd(\overline{\beta}_0,\dots,\overline{\beta}_{i-1})/\gcd(\overline{\beta}_0,\dots,\overline{\beta}_{i})\), then \(\Gamma(C)\) satisfies the following properties
\begin{enumerate}
	\item \label{eq:semigroupbranch1bisbis} \(n_i\overline{\beta}_i\in\langle \overline{\beta}_0, \overline{\beta}_1, \dots, \overline{\beta}_{i-1} \rangle\),
	\item \label{eq:semigroupbranch2bisbis} \(n_i\overline{\beta}_i<\overline{\beta}_{i+1}\) for all \(i=1,\dots,g.\)
\end{enumerate}
\medskip

In 1972, Bresinsky \cite[Theorem 2]{bresinsky} and Teissier \cite[Chap. I.~3.2]{teissier-appendix} independently proved that, for any numerical semigroup \(\Gamma\) satisfying conditions \eqref{eq:semigroupbranch1bisbis} and \eqref{eq:semigroupbranch2bisbis}, there exists a plane branch \mbox{\((C,\textbf{0})\subset(\mathbb{C}^2,\textbf{0})\)} such that \(\Gamma=\Gamma(C)\); we will refer to this statement as the Bresinsky-Teissier Theorem. The analytic counterpart of \(\Gamma\) is the value set of K\"{a}hler differentials. The value set of K\"{a}hler differentials of an irreducible plane curve singularity is defined as \(\Delta'=v(Rdx+Rdy)\); this is an analytic invariant, as proved by Delorme in 1978 \cite{delorme78}. Moreover, one can easily check that \(\Delta'\) has structure of \(\Gamma\)--semimodule, its normalization is given by \(\Delta=\Delta'-(\overline{\beta}_0-1),\) where \(\overline{\beta}_0\) is the multiplicity of \(C\) at the singular point, and $\Delta=v(R+Rdy/dx)$; therefore we will employ the terminology \emph{K\"{a}hler semimodule}.
\medskip

In this paper, we will focus on the case that $C:f(x,y)=0$ is the germ of an irreducible plane curve singularity with one Puiseux pair, i.e. a germ of plane curve with equation 
$$
f(x,y)=x^{p}-y^{q}+\sum_{iq+jp>pq}a_{i,j}x^{i}y^{j},
$$
where $\gcd(p,q)=1$. In this particular case the semigroup of values of \(C\) is minimally generated by $p$ and $q$, i.e. $\Gamma=\langle p,q\rangle:=\{n\in\mathbb{N}|\;n=ap+bq\}$. 
\medskip

The main goal of this paper is to provide an extension of the Bresinsky-Teissier Theorem to \(\Gamma\)--semimodules, which will lead to the notion of \emph{increasing semimodule}.  First of all, we provide ---with the help of the lattice path representation of the elements in $\mathbb{N}\setminus \Gamma$ due to the third author and Uliczka \cite{MU1, MU2}--- a \emph{constructive} algorithm to compute all possible increasing semimodules for a fixed \(\Gamma\) (see Section \ref{subsec:constmaltes}).  As a consequence, we prove that the set of all possible increasing semimodules of a given \(\Gamma\) has a natural tree structure (see Section \ref{subsec:constmaltes}).
After that, in a purely abstract way, our main Theorem (Theorem \ref{thm:existence-curve}) reads as follows: given both a numerical semigroup \(\Gamma=\langle p,q\rangle\) with two minimal generators and an increasing \(\Gamma\)-semimodule \(\Delta\), we can construct a Puiseux parameterization of a plane curve with value semigroup \(\Gamma\) whose local ring has an \(R\)-module \(M\) such that \(\Delta_{M}=\Delta\). In particular, the existence of an increasing semimodule induces naturally a deformation of the parameterization of the curve, see Remark \ref{rem:deformation}. This contrasts with the fact that ---in the one Puiseux pair case--- none of the results of Piontkowski \cite[Section 3]{piont} put specific conditions over the \(\Gamma\)--semimodule \(\Delta;\) therefore, they do not provide a complete classification of value set of modules. From this point of view, Theorem \ref{thm:existence-curve} can be thought as a sort of analogue to Teissier's result \cite[Chapter I,~Prop. 3.2.1]{teissier-appendix} and hence an improvement of Piontkowski's results for the case of modules over the local ring associated to a branch with one Puiseux pair. These results can be considered as the main contributions of this work.
\medskip

 In this generality, the converse of our Theorem \ref{thm:existence-curve} is still a work in progress. However, thanks to Delorme's results \cite[Lemma 12]{delorme78} we can prove it for the particular set of increasing semimodules whose first generator is \(q-p,\) assuming \(p<q\) (Corollary \ref{cor:kahlerall}). In particular, this implies that the value sets of K\"ahler differentials can be computed from the combinatorics of the increasing semimodules over $\Gamma$. We illustrate this fact with a final example obtained from the implementation of our constructive proof of Theorem \ref{thm:existence-curve} in \textsf{Mathematica} code \cite{mathe} available in the PhD Thesis of the second author \cite[Appendix A]{almtesis}.
 \medskip
 
 In the last stage of the preparation of this manuscript, different works which investigate topics of this paper have appeared, as we sum up here. In \cite{hernandesabreu}, de Abreu and Hernandes present a procedure to recover the semigroup of values of an irreducible plane curve singularity with any number of Puiseux pairs from the semimodule of K\"{a}hler differentials, which provides a test to decide whether a given subset is a semimodule of Kähler differentials or not; combining that result together with \cite{hefez1,hefez2}, they present an algorithm to obtain all possible K\"{a}hler semimodules associated to an equisingularity type based on the computation of all possible standard basis for the Tjurina algebra. As already said, our procedure has the advantage of avoiding the standard basis approach for branches with one Puiseux pair. In \cite{CCS}, Cano, Corral and Senovilla-Sanz investigate the K\"ahler differentials of a branch with one Puiseux pair using foliation theory; they consider our notion of increasing semimodule in order to study the particular case of K\"ahler semimodules, and describe the different truncations as analytic semiroots. In contrast to all these cases,  we present a more general framework which deals with modules over the ring of a plane branch with one Puiseux pair. In the particular case of K\"ahler differentials, our techniques allow to prove that all combinatorial possibilities of a semimodule to be a K\"ahler semimodule can be realizable; as far as the authors' knowledge, this fact cannot be obtained by any method in the existing literature.

\section{Numerical semigroups and semimodules}\label{sec:numerical semigroups}

A numerical semigroup $\Gamma$ is an additive sub-semigroup of the monoid $(\mathbb{N},+)$ such that the greatest common divisor of all its elements is equal to $1$. The complement $\mathbb{N}\setminus \Gamma$ is therefore finite, and its elements are called the gaps of $\Gamma$. Thus, \(\Gamma\) is finitely generated. The number $c(\Gamma)=\max (\mathbb{N}\setminus \Gamma)+1$ is called the conductor of $\Gamma$.
\medskip

A $\Gamma$-semimodule is a non-empty subset \(\Delta\) of $\mathbb{N}$ such that $\Delta+\Gamma\subseteq \Delta$. A system of generators of $\Delta$ is a subset $\mathcal{E}$ of $\Delta$ with $\Delta=\bigcup_{x\in \mathcal{E}} (x+\Gamma)$; it is called minimal if no proper subset of $\mathcal{E}$ generates $\Delta$. Notice that, since $\Delta\setminus \Gamma$ is finite, every $\Gamma$-semimodule is finitely generated, and so \(\Delta\) has a conductor, which is defined as $c(\Delta)=\max (\mathbb{N}\setminus \Delta)+1$.  Moreover, every $\Gamma$-semimodule $\Delta$ has a unique minimal system of generators (see e.g. \cite[Lemma 2.1]{MU1}). Two $\Gamma$-semimodules $\Delta$ and $\Delta'$ are called isomorphic if there is an integer $n$ such that $x\mapsto x+n$ is a bijection from $\Delta$ to $\Delta'$; we write then $\Delta\cong \Delta'$.
\medskip

For every $\Gamma$-semimodule $\Delta$ there is a unique semimodule $\Delta' \cong \Delta$ containing $0$; this semimodule is called normalized. Accordingly, the $\Gamma$-semimodule
$$
\Delta^{\circ} := \{x-\min \Delta : x \in \Delta\}
$$
is called the normalization of $\Delta$; the normalization of $\Delta$ is the unique $\Gamma$-semimodule isomorphic to $\Delta$ which contains $0$. 
\medskip

The minimal system of generators $x_0=0, x_1,\ldots , x_s$ of a normalized $\Gamma$-semimodule is what the third author and Uliczka called a $\Gamma$-lean set \cite{MU1}, i.e. it satisfies that
$$
|x_i-x_j| \notin \Gamma \ \ \mbox{for~any} \ \ 0\leq i <j \leq s,
$$
and conversely, every $\Gamma$-lean set of $\mathbb{N}$ minimally generates a normalized $\Gamma$-semimodule. Hence there is a bijection between the set of isomorphism classes of $\Gamma$-semimodules and the set of $\Gamma$-lean sets of $\mathbb{N}$, cf.~\cite[Corollary 2.3]{MU1}.
\medskip

For specific material about numerical semigroups, the reader is referred to the books of Rosales and Garc\'ia S\'anchez \cite{RosalesGarciaSanchez} and Ram\'irez Alfons\'in \cite{RamirezAlfonsin}.

\subsection{Lattices paths} \label{subs:lattice} In this paper we will restrict our attention to numerical semigroups with two generators, say $\Gamma=\langle \alpha, \beta \rangle=\mathbb{N}\alpha + \mathbb{N}\beta$ for two integer numbers $\alpha,\beta$  with $1<\alpha < \beta $ and $\mathrm{gcd}(\alpha, \beta) = 1$. In this case the conductor of $\Gamma$ is simply $c=c(\langle \alpha,\beta \rangle)=(\alpha-1)(\beta-1)$. The gaps of $\langle \alpha, \beta \rangle$ are also easy to describe: they are of the form $\alpha \beta -a\alpha -b \beta$, where $a\in\ ]0,\beta-1]$ and $b\in \ ]0,\alpha-1]$. This description yields a map from the set of gaps of $\langle \alpha, \beta \rangle$ to $\mathbb{N}^2$ given by $\alpha \beta -a\alpha -b \beta \mapsto (a,b)$, which allows us to identify a gap with a lattice point; since the gaps are positive numbers, the point lies inside the triangle $\mathcal{T}_{\alpha,\beta}$ with vertices $(0,0),(0,\alpha),(\beta, 0)$. 
\medskip

In the following, we will use the notation
$$
x_i=\alpha\beta - a_i\alpha-b_i\beta \ \   \ \mbox{or} \ \ \ x_i=\alpha\beta - a(x_i) \alpha-b(x_i)\beta
$$
for a gap $x_i$ of the semigroup $\langle \alpha,\beta \rangle$. We define a partial ordering $\preceq$ on the set of gaps as follows:

\begin{defin}\label{def:order}
	For gaps $x_1,x_2$ of $\langle \alpha , \beta \rangle$, we define 
	$$
	x_1 \preceq x_2 \ \ :\Longleftrightarrow \ a_1\leq a_2 \ \ \wedge \ \ b_1 \geq b_2
	$$
	and
	$$
	x_1 \prec x_2 \ \  :\Longleftrightarrow  \ a_1< a_2 \ \ \wedge \ \ b_1 >b_2.
	$$
\end{defin}

Let $\mathcal{E}=\{0,x_1,\ldots , x_s\} \subseteq \mathbb{N}\setminus\Gamma$ be a subset of gaps of $\Gamma=\langle \alpha, \beta  \rangle$ such that for every $i=1,\ldots, s$ it fulfills $a_1<a_2<\cdots < a_s$. Corollary 3.3 in \cite{MU1} ensures that $\mathcal{E}$ is $\langle \alpha , \beta \rangle$-lean if and only if $b_1>b_2>\cdots > b_s$. This simple fact leads to an identification between an $\langle \alpha, \beta \rangle$-lean set and a lattice path with steps downwards and to the right from $(0,\alpha)$ to $(\beta,0)$ not crossing the line joining these two points, where the lattice points identified with the gaps in $\mathcal{E}$ mark the turns from the $x$-direction to the $y$-direction, see \cite[Lemma 3.4]{MU1}; these turns will be called ES-turns for abbreviation. As an example, Figure \ref{fig:lattice1} shows the lattice path corresponding to the $\langle 5,7 \rangle$-lean set $[0,9,6,8]$.

\begin{center}
	\begin{figure}[H]
		\begin{tikzpicture}[scale=0.75]
		\draw[] (0,0) grid [step=1cm](7,5);
		\draw[] (0,5) -- (7,0);
		\draw[ultra thick] (0,5) -- (0,3) -- (1,3) -- (1,2) -- (3,2) -- (3,1) -- (4,1) -- (4,0) -- (7,0);
		\draw[fill=white] (0,5) circle [radius=0.1]; 
		\draw[fill] (1,3) circle [radius=0.1]; 
		\draw[fill] (3,2) circle [radius=0.1]; 
		\draw[fill] (4,1) circle [radius=0.1]; 
		\draw[fill=white] (7,0) circle [radius=0.1]; 
		
		\node [below right][DimGray] at (0.15,0.8) {$23$};
		\node [below right][DimGray] at (1.15,0.8) {$18$};
		\node [below right][DimGray] at (2.15,0.8) {$13$};
		\node [below right] at (3.25,0.8) {$8$};
		\node [below right][DimGray] at (4.25,0.8) {$3$};
		
		\node [below right][DimGray] at (0.15,1.8) {$16$};
		\node [below right][DimGray] at (1.15,1.8) {$11$};
		\node [below right] at (2.15,1.8) {$6$};
		\node [below right][DimGray] at (3.25,1.8) {$1$};
		
		\node [below right] at (0.25,2.8) {$9$};
		\node [below right][DimGray] at (1.25,2.8) {$4$};
		
		\node [below right][DimGray] at (0.25,3.8) {$2$};
		\end{tikzpicture}

		\caption{Lattice path for the $\langle 5,7 \rangle$-lean set $[0,9,6,8]$.} \label{fig:lattice1}
	\end{figure}
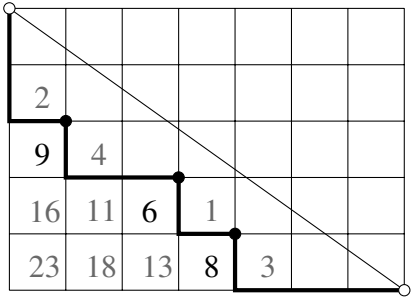
\end{center}

We will use the notation $[0,x_1,\ldots , x_s]$ for the minimal system of generators of a $\Gamma$-semimodule just to emphasize the ordering of Definition \ref{def:order}.
\medskip

Let $[g_0=0,g_1,\ldots , g_s]$ be the minimal system of generators of a $\langle \alpha, \beta \rangle$-semimodule $\Delta$. In \cite{MU1} the third author and Uliczka introduced the notion of syzygy of $\Delta$ as the $\langle \alpha, \beta \rangle$-semimodule
$$
\mathrm{Syz}(\Delta):=\bigcup_{{\tiny\begin{array}{c}
i,j\in \{0,\ldots , s\}\\
i\neq j\end{array}}} \Big ((\Gamma + g_i)\cap (\Gamma + g_j) \Big ).
$$
According to \cite[Theorem 4.2]{MU1}, for $\Gamma=\langle \alpha, \beta \rangle$ the syzygies of a normalized $\Gamma$-semimodule \(\Delta\) can be characterized as follows:

\begin{proposition}\cite[Theorem 4.2]{MU1}\label{defin:syz}
	Let \(\Delta\) be a \(\Gamma\)-semimodule with minimal system of generators $[g_0=0,g_1,\dots,g_s]$. Assume that the minimal system of generators is ordered with the gap-order, i.e. \(g_0\prec g_1\prec\cdots\prec g_s.\) Then the syzygy of $\Delta$ is the set
	\begin{equation*}
	\mathrm{Syz}(\Delta) =\bigcup_{0\leq k<j\leq s} \Big ((\Gamma + g_k)\cap (\Gamma + g_j) \Big )= \bigcup_{k=0}^{s} (\Gamma + h_k),
	\end{equation*}
	
	where $h_1,\ldots , h_{s-1}$ are gaps of $\Gamma$, $h_0,h_s \leq \alpha \beta$, and
	
	\begin{align*}
	&h_k \equiv g_k ~\mathrm{ mod }~ \alpha, ~h_ k > g_{k}~~  \ \mbox{for }~~ k=0,\ldots , s \\
	&h_k \equiv g_{k+1} ~ \mathrm{mod } ~\beta,~ h_ k > g_{k+1}~~ \ \mbox{for } ~~k=0,\ldots , s-1\\
	&h_s \equiv 0~\mathrm{mod } ~\beta, \ \mathrm{and } ~h_s > 0.
	\end{align*}
\end{proposition}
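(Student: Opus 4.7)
Since $\mathrm{Syz}(\Delta)$ is closed under $\Gamma$-translation, it is itself a $\Gamma$-semimodule, and the plan is to exhibit $h_0,\ldots,h_s$ as a system of generators. First I would establish existence, uniqueness and the inequality bounds for each $h_k$: because $\gcd(\alpha,\beta)=1$, the Chinese Remainder Theorem determines $h_k$ uniquely modulo $\alpha\beta$ from the two prescribed congruence conditions, and the strict inequalities (together with the bound $h_k\le \alpha\beta$ at the extremes $k=0,s$) select a unique representative. The coordinate constraints $a_\ell\in[0,\beta-1]$ and $b_\ell\in[0,\alpha-1]$ coming from the triangle $\mathcal{T}_{\alpha,\beta}$ then force the intermediate $h_1,\ldots,h_{s-1}$ to be genuine gaps of $\Gamma$.

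The inclusion $\bigcup_{k=0}^{s}(\Gamma+h_k)\subseteq \mathrm{Syz}(\Delta)$ is the easy direction. The conditions $h_k\equiv g_k\pmod\alpha$ and $h_k>g_k$ force $h_k-g_k$ to be a positive multiple of $\alpha$, so $h_k\in\Gamma+g_k$; analogously $h_k\in\Gamma+g_{k+1}$, with the end-of-path version $h_s\equiv 0\pmod\beta$ placing $h_s$ in $\beta\,\mathbb{N}_{>0}\subseteq \Gamma=\Gamma+g_0$. Hence each $h_k$ lies in the intersection of two distinct principal ideals, and its full $\Gamma$-translate is absorbed by $\mathrm{Syz}(\Delta)$.

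The heart of the proof is the reverse inclusion. Fix $m\in(\Gamma+g_i)\cap(\Gamma+g_j)$ with $0\le i<j\le s$; the goal is to produce $k$ with $m\in \Gamma+h_k$. I plan to invoke the lattice-path description of Subsection~\ref{subs:lattice}: the principal ideal $\Gamma+g_\ell$ is realised as the ``down-and-left'' quadrant with apex at the ES-turn $(a_\ell,b_\ell)$, so the intersection of two such quadrants is itself a quadrant with apex $(a_i,b_j)$. By the ordering $a_i<\cdots<a_j$ and $b_i>\cdots>b_j$, this apex lies weakly below every ``SE-turn'' $(a_k,b_{k+1})$ with $i\le k\le j-1$, and the element at that SE-turn is exactly the one satisfying the congruences defining $h_k$. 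Consequently $m$, whose lattice representation sits inside the apex quadrant, also lies in the quadrant based at $(a_k,b_{k+1})$, i.e.\ $m\in\Gamma+h_k$.

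The principal technical obstacle is the uniform handling of the boundary: when $g_0=0$ is involved in the intersection, $\Gamma+g_0=\Gamma$ does not correspond to a finite corner but to the two boundary strips of the extended lattice picture, and the symmetric degeneracy occurs at the other endpoint $(\beta,0)$ of the path. This is exactly what the extra generator $h_s$, defined by the special congruence $h_s\equiv 0\pmod\beta$, is designed to absorb. Viewing $0$ simultaneously as a virtual generator at the two endpoints $(0,\alpha)$ and $(\beta,0)$ of the lattice path makes the quadrant-intersection argument above extend to the boundary cases without change.
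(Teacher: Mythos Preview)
The paper does not prove this proposition: it is quoted verbatim from \cite[Theorem~4.2]{MU1} and no argument is supplied here. So there is nothing in the present paper to compare your attempt against; I can only comment on whether your outline would actually establish the result.

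Your treatment of the easy inclusion and of the boundary cases is fine, but the core step of the reverse inclusion contains a genuine gap. You assert that ``the intersection of two such quadrants is itself a quadrant with apex $(a_i,b_j)$''. This is false: by Lemma~\ref{lem:1}(1) one has
\[
(\Gamma+g_i)\cap(\Gamma+g_j)=(\Gamma+u)\cup(\Gamma+v),\qquad u=\alpha\beta-a_i\alpha-b_j\beta,\quad v=g_i+g_j+\alpha\beta-u=2\alpha\beta-a_j\alpha-b_i\beta,
\]
so the intersection has \emph{two} minimal elements, not one. Your lattice argument correctly places $\Gamma+u$ inside $\Gamma+h_k$ for each $k$ with $i\le k\le j-1$, but it says nothing about $\Gamma+v$. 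Concretely, in the paper's own Example with $\Gamma=\langle5,7\rangle$ and $\Delta=[0,9,6,8]$, taking $i=1$, $j=3$ gives $v=29$, and one checks $29\notin\Gamma+h_k$ for $k\in\{1,2\}$; it is only absorbed by the extremal syzygies. The missing verification is easy once noticed---for instance $v-h_s=(a_s-a_j)\alpha+(\alpha-b_i)\beta\in\Gamma$, so $\Gamma+v\subseteq\Gamma+h_s$ always---but it must be said, and it shows that restricting to $k\in[i,j-1]$ cannot work. In lattice terms, the point is that an element $n$ may lie in both $\Gamma+g_i$ and $\Gamma+g_j$ via \emph{different} representatives $(a,b)$ and $(a',b')=(a+t\beta,b-t\alpha)$, and your ``common quadrant'' picture only captures the case $t=0$.
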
 

\begin{rem}
	Observe that the modular conditions for the generators of the syzygy module \([h_0,\dots,h_s]\) give us explicit expressions for \(h_i\) in terms of the coordinates of the minimal system of generators of \(\Delta.\) 
\medskip

	Assume that, for the minimal system of generators $[g_0=0,g_1,\dots,g_s]$  of \(\Delta\), for any $i=1,\ldots ,s$ we write $g_i=\alpha\beta-a_i\alpha-b_i\beta$ i.e. with \(g_0\prec g_1\prec\cdots\prec g_s.\) Then, 
	\[h_i=\alpha\beta-a_{i-1}\alpha-b_i\beta.\]
\end{rem}

\begin{ex}
Let us consider again $\Gamma=\langle 5,7\rangle$ and the semimodule $\Delta$ with minimal system of generators $[0,9,6,8]$ and lattice path as in Figure \ref{fig:lattice1}. Then the $\langle 5,7\rangle$-semimodule \(\mathrm{Syz}(\Delta)\) is minimally generated by $h_0=15$, $h_1=13$, $h_2=16$ and $h_3=14$.
\end{ex}

The syzygies allowed the second and third authors \cite{almiyano} to give a formula for the conductor $c(\Delta)$ of $\Delta$:

\begin{theorem}\cite[Theorem 1]{almiyano}\label{formulaconductor}
	Let \(\Delta\) be a \(\Gamma\)-semimodule. Let \(I\) be a minimal system of generators of \(\Delta\) and \(J\) be a minimal system of generators of \(\operatorname{Syz}(\Delta)\). Let \(M:=\max_{\leq_\mathbb{N}}\{h\in J\}\) denote the biggest, with respect to the usual ordering of the natural numbers, minimal generator of the syzygy module. Then
	\[c(\Delta)=M-\alpha-\beta+1.\]
	In particular, if we denote by \((m_1,m_2)\) the point in the lattice $\mathcal{L}$ representing \(M\), then we have
	\[c(\Delta)=c(\Gamma)-m_1\alpha-m_2\beta.\]
\end{theorem}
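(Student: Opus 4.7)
The plan is to translate the formula into a minimisation problem on the lattice path of $\Delta$ inside $\mathcal{T}_{\alpha,\beta}$. Since $M = \alpha\beta - m_1\alpha - m_2\beta$ and $c(\Gamma) = \alpha\beta - \alpha - \beta + 1$, the two stated equalities are equivalent, so it suffices to prove the first one by identifying $M - \alpha - \beta$ as the largest gap of $\Delta$.

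The core step is the following geometric criterion: for a gap $n = \alpha\beta - a\alpha - b\beta$ of $\Gamma$, one has $n \in \Delta$ if and only if $(a, b) \preceq (a_i, b_i)$ for some ES-turn $(a_i, b_i)$ of the lattice path of $\Delta$. The $(\Leftarrow)$ direction is immediate from the explicit non-negative representation $n - g_i = (a_i - a)\alpha + (b_i - b)\beta \in \Gamma$. For $(\Rightarrow)$, if $n \in g_j + \Gamma$ then $n - g_j = k\alpha + l\beta$ with $k, l \geq 0$, and the set of all integer representations is $\{(a_j - a + m\beta,\, b_j - b - m\alpha) : m \in \mathbb{Z}\}$; the bounds $1 \leq a \leq \beta - 1$ and $1 \leq b \leq \alpha - 1$ then force $m = 0$ to be the only choice with both coordinates non-negative, whence $(a, b) \preceq (a_j, b_j)$. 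Hence the gaps of $\Delta$ correspond bijectively to the interior lattice points of $\mathcal{T}_{\alpha,\beta}$ lying strictly above the path of $\Delta$, and the largest gap is the one minimising the weight $a\alpha + b\beta$.

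By the Remark after Proposition~\ref{defin:syz}, the minimal syzygies of $\Delta$ correspond to the inner corners (SE-turns) $(u, v)$ of the path via $h = \alpha\beta - u\alpha - v\beta$. A direct look at the staircase shape shows that every lattice point $(a, b)$ strictly above the path satisfies $(a, b) \geq (u + 1, v + 1)$ coordinate-wise for the SE-turn $(u, v)$ at the start of the path segment lying directly below or to the left of $(a, b)$. Therefore the minimum of $a\alpha + b\beta$ over above-path interior points is attained at $(m_1 + 1, m_2 + 1)$ for the SE-turn realising the largest $h = M$, with value $m_1\alpha + m_2\beta + \alpha + \beta = \alpha\beta + \alpha + \beta - M$; the largest gap of $\Delta$ is thus $M - \alpha - \beta$, and $c(\Delta) = M - \alpha - \beta + 1$.

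The main delicacy I foresee is confirming that the candidate point $(m_1 + 1, m_2 + 1)$ really represents an honest gap of $\Gamma$, i.e.\ lies strictly inside $\mathcal{T}_{\alpha,\beta}$ and below its hypotenuse. The only SE-turns on $\partial \mathcal{T}_{\alpha,\beta}$ are the extremal ones $(0, b_1)$ and $(a_s, 0)$; since every $g_i$ is a positive gap of $\Gamma$, one obtains $a_s \leq \beta - 2$ automatically, and in the extremal case $b_1 = \alpha - 1$ the estimate $a_s\alpha < (\alpha - 1)\beta$ shows that $(a_s, 0)$ already has strictly smaller weight than $(0, \alpha - 1)$, so the problematic boundary SE-turns cannot be the minimiser. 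These routine but slightly delicate case checks are what prevent the result from being read off entirely trivially from the lattice-path picture.
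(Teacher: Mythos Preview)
The present paper does not give its own proof of this statement: Theorem~\ref{formulaconductor} is quoted verbatim from \cite{almiyano} and no argument appears here. So there is no in-paper proof to compare against; that said, your lattice-path strategy is precisely the combinatorial framework set up in \S\ref{subs:lattice} and is almost certainly the intended one.

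Your plan is essentially correct, but the final ``delicacy'' is misdiagnosed in two respects. First, the real obstruction to $(m_1+1,m_2+1)$ representing a gap of $\Gamma$ is not the rectangular bound but the hypotenuse inequality $(m_1+1)\alpha+(m_2+1)\beta<\alpha\beta$, i.e.\ $M>\alpha+\beta$; this can fail for \emph{interior} SE-turns just as well as for the axial ones (take $\Gamma=\langle 3,5\rangle$, $\Delta=[0,1,2]=\mathbb{N}$: the maximal syzygy $M=7$ sits at the interior SE-turn $(1,1)$, yet $(2,2)$ lies outside the triangle). Your own upper bound already shows that if $\Delta\neq\mathbb{N}$ then some positive gap is $\le M-\alpha-\beta$, forcing $M\ge\alpha+\beta+1$, after which $(m_1+1,m_2+1)$ is automatically a valid gap for \emph{any} SE-turn realising $M$. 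The residual case $\Delta=\mathbb{N}$, where one must verify $M=\alpha+\beta-1$ directly, is not covered by your outline. Second, the inequality you invoke for the axial case, $a_s\alpha<(\alpha-1)\beta$, fails whenever $\beta>2\alpha$ and $s\ge 2$; the clean way to rule out the SE-turn $(0,\alpha-1)$ as maximiser is to compare it with the \emph{adjacent} SE-turn $(a_1,b_2)$, using that $b_1=\alpha-1$ and $g_1>0$ give $a_1\alpha<\beta$, whence $\alpha\beta-a_1\alpha-b_2\beta>\beta$.
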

\begin{rem}
	From now on, all minimum and maxima will be taken under the order of the natural numbers; we will then write \(\max_{\leq_{\mathbb{N}}},\min_{\leq_{\mathbb{N}}}.\)
\end{rem}

As said, with the ordering \(g_0\prec g_1\prec \dots \prec g_s\) for the elements of $\Delta$ we write
\[
E_i=\bigcup_{0\leq j\leq i}(\Gamma+g_j)\;\;\;\;\text{for}\;0\leq i\leq s.
\]

For any $i=1,\dots,s$, the number $u_{i}:=\min_{\leq_{\mathbb{N}}}\{(\Gamma+g_i)\cap E_{i-1}\}$ will play an important role in the sequel. We may write $u_i$ in terms of the coordinates in the lattice path $\mathcal{L}$:

\begin{proposition}\label{prop:uisyz}
	For any $i=1,\ldots ,s$ and $g_i=\alpha\beta-a_i\alpha-b_i\beta$, let us define $a_0=b_0=0$, $x_i=\alpha(\beta-a_i)$, $y_i=\beta(\alpha-b_i)$ and $m_i=\min_{\leq_\mathbb{N}}(x_i, y_i)$. Let us assume that \(g_0\prec g_1\prec \dots \prec g_s\). Then
	$$
	u_i=\min_{\leq_\mathbb{N}} \{\alpha\beta-a_{i-1}\alpha-b_i\beta, m_i\}.
	$$
\end{proposition}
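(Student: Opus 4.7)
The plan is to compute $u_i$ by splitting $(\Gamma+g_i)\cap E_{i-1}$ into the subset of its elements that lie in $\Gamma$ and the subset of its elements that are gaps of $\Gamma$, minimizing on each piece. Since $\Gamma=\Gamma+g_0\subseteq E_{i-1}$, the elements of $(\Gamma+g_i)\cap E_{i-1}$ lying in $\Gamma$ are exactly those of $(\Gamma+g_i)\cap\Gamma$. Both $x_i=g_i+b_i\beta=\alpha(\beta-a_i)$ and $y_i=g_i+a_i\alpha=\beta(\alpha-b_i)$ visibly belong to this intersection, giving an upper bound $m_i$ for its minimum. Conversely, any $z=g_i+s\alpha+t\beta$ with $0\leq s<a_i$ and $0\leq t<b_i$ rewrites as $z=\alpha\beta-(a_i-s)\alpha-(b_i-t)\beta$ with $a_i-s\in[1,\beta-1]$, $b_i-t\in[1,\alpha-1]$, and $(a_i-s)\alpha+(b_i-t)\beta\leq a_i\alpha+b_i\beta<\alpha\beta$, which exhibits $z$ as a gap of $\Gamma$ inside the triangle $\mathcal{T}_{\alpha,\beta}$ and therefore excludes $z\in\Gamma$. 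Hence the minimum of $(\Gamma+g_i)\cap\Gamma$ equals $m_i$.

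For the gap part, the crux is the auxiliary claim that, for any gap $z=\alpha\beta-A\alpha-B\beta$ of $\Gamma$ (with $A\in[1,\beta-1]$, $B\in[1,\alpha-1]$) and any $g=\alpha\beta-a\alpha-b\beta$ with $a\in[1,\beta-1]$, $b\in[1,\alpha-1]$, one has $z\in g+\Gamma$ if and only if $A\leq a$ and $B\leq b$. The backward direction is the identity $z=g+(a-A)\alpha+(b-B)\beta$. For the forward direction, any representation $z-g=s\alpha+t\beta$ with $s,t\geq 0$ yields $(a-A-s)\alpha=(t+B-b)\beta$, so by $\gcd(\alpha,\beta)=1$ both sides equal $k\alpha\beta$ for some $k\in\mathbb{Z}$; the prescribed ranges of $A,a,B,b$ rule out $|k|\geq 1$, forcing $s=a-A\geq 0$ and $t=b-B\geq 0$. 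Applying this claim to $g_i$ and to each $g_j$ with $1\leq j\leq i-1$, a gap $z=\alpha\beta-A\alpha-B\beta$ lies in $(\Gamma+g_i)\cap(\Gamma+g_j)$ precisely when $A\leq\min(a_i,a_j)=a_j$ and $B\leq\min(b_i,b_j)=b_i$, where I invoke the fact that $g_j\prec g_i$ gives $a_j<a_i$ and $b_j>b_i$. The minimum such $z$ is obtained at $A=a_j$, $B=b_i$, namely $\alpha\beta-a_j\alpha-b_i\beta$, and this quantity is monotonically decreasing in $a_j$, so the minimum over $j\in\{1,\ldots,i-1\}$ is reached at $j=i-1$ with value $\alpha\beta-a_{i-1}\alpha-b_i\beta$.

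Combining the two parts yields $u_i=\min_{\leq_\mathbb{N}}(\alpha\beta-a_{i-1}\alpha-b_i\beta,\,m_i)$; in the edge case $i=1$ the gap part is vacuous, but the formula still holds because $\alpha\beta-a_0\alpha-b_1\beta=y_1\geq m_1$ under the convention $a_0=b_0=0$. I expect the main obstacle to be the auxiliary characterization of which gaps lie in $g+\Gamma$: it rests on extracting uniqueness of the coordinates $(A,B)$ from the interplay between $\gcd(\alpha,\beta)=1$ and the ranges $[1,\beta-1]$, $[1,\alpha-1]$, and it is this step that reduces the seemingly multi-generator minimization over all $g_j$ with $j<i$ to a single monotone scan, picking out the immediate predecessor $g_{i-1}$.
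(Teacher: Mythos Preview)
Your proof is correct and follows essentially the same strategy as the paper: both arguments identify the contribution from $j=0$ as $m_i=\min(x_i,y_i)$ and then show that among the indices $j=1,\ldots,i-1$ the minimum $\alpha\beta-a_j\alpha-b_i\beta$ is monotone in $a_j$, hence attained at $j=i-1$. The only cosmetic difference is that you partition $(\Gamma+g_i)\cap E_{i-1}$ according to whether an element lies in $\Gamma$ or is a gap, whereas the paper partitions by the index $j$ and appeals to a figure for the values $\omega_{j,i}$; your auxiliary claim (that a gap $\alpha\beta-A\alpha-B\beta$ lies in $g+\Gamma$ iff $A\le a$ and $B\le b$) makes explicit what the paper leaves as ``easily checked''.
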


\begin{proof}
	Along the whole proof \(\min:=\min_{\leq_{\mathbb{N}}}.\) For every $i=0,\ldots, s$ and $j=1,\ldots, s$ set $\Omega_{i,j}:=(\Gamma+g_i)\cap (\Gamma+g_j)$ and $\omega_{i,j}:=\mathrm{min}\ \Omega_{i,j}$.
	It is easily checked that
	$$
	\begin{array}{lcl}
	\omega_{0,i}&=&\min \{\beta(\alpha-b_i),\alpha(\beta-a_i)\}\\
	\omega_{i,i+j} &=&\alpha\beta-a_i\alpha-b_{i+j}\beta
	\end{array}
	$$
	\begin{center}
		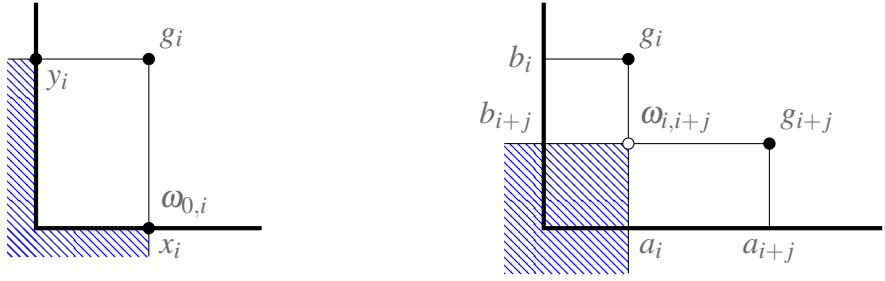
\begin{figure}[H]
			\begin{tikzpicture}[scale=0.75]
			
			\draw[draw=none, pattern=north west lines, pattern color=blue] (8.3,-0.8) rectangle (10.5,1.5);
			\draw[draw=none, pattern=north west lines, pattern color=blue] (-0.5,-0.5) rectangle (0,3);
			\draw[draw=none, pattern=north west lines, pattern color=blue] (-0.5,-0.5) rectangle (2,0);
			%  \draw[] (0,0) grid [step=1cm](7,5);
   			\draw[ultra thick] (0,4) -- (0,0) -- (4,0);
			%\draw[ultra thick] (0,5) -- (0,3) -- (1,3) -- (1,2) -- (3,2) -- (3,1) -- (4,1) -- (4,0) -- (7,0);
			%\draw[fill=white] (0,5) circle [radius=0.1]; 
			\draw[fill] (2,3) circle [radius=0.1]; 
			\draw[] (2,3) -- (2,-0.5);
			\draw[] (2,3) -- (-0.5,3);
			\draw[fill] (0,3) circle [radius=0.1]; 
			\draw[fill] (2,0) circle [radius=0.1]; 
			%   \draw[fill=white] (7,0) circle [radius=0.1]; 
			
			\node [below right][DimGray] at (0,3) {$y_i$};
			\node [below right][DimGray] at (2,0) {$x_i$};
			\node [above right][DimGray] at (2,0) {$\omega_{0,i}$};
			\node [above right][DimGray] at (2,3) {$g_i$};
			
			\draw[ultra thick] (9,4) -- (9,0) -- (15,0);
			\draw[fill] (10.5,3) circle [radius=0.1]; 
			\draw[] (10.5,3) -- (10.5,-0.8);
			\draw[] (10.5,3) -- (9,3);
			\node [below right][DimGray] at (10.5,0) {$a_i$};
			\node [below ][DimGray] at (13,0) {$a_{i+j}$};
			\node [above left][DimGray] at (9,1.5) {$b_{i+j}$};
			\node [left][DimGray] at (9,3) {$b_{i}$};
			
			\draw[fill] (13,1.5) circle [radius=0.1]; 
			\draw[] (13,1.5) -- (13,0);
			\draw[] (13,1.5) -- (8.3,1.5);
			
			\draw[fill=white] (10.5,1.5) circle [radius=0.1]; 
			\node [above right][DimGray] at (10.5,1.5) {$\omega_{i,i+j}$};
			\node [above right][DimGray] at (13,1.5) {$g_{i+j}$};
			\node [above right][DimGray] at (10.5,3) {$g_i$};
			\end{tikzpicture}	
			
			\caption{Shaded regions $(\Gamma + x_i)\cup (\Gamma+y_i)$ and $\Omega_{i,i+j}$.} \label{fig:lattice2}
		\end{figure}
	\end{center}

	Now for $i=1,\ldots, s$ we get
	\begin{align*}
	u_i=&\min \{(\Gamma+g_i)\cap E_{i-1}\}=\min \Big \{(\Gamma+g_i)\cap \bigcup_{j=0}^{j-1}(\Gamma+g_j)\Big \}\\
	=& \min (\Omega_{0,i}\cup \Omega_{1,i}\cup \cdots \cup \Omega_{i-1,i})=\min \{\omega_{0,i}, \omega_{1,i},\ldots , \omega_{i-1,i}\}.
	\end{align*}
	Observe that $\omega_{j,i}<\omega_{j+1,i}$ for $j=1,\ldots , i-1$ since $a_{k}>a_{k+1}$ for all $k=1,\ldots, i-2$; therefore $u_i=\min\{\omega_{0,i},\omega_{i-1,i}\}$, as desired.
\end{proof}

\begin{rem}
	If all \(u_i\) became of the form \(\alpha\beta-a_{i-1}\alpha-b_i\beta\), then \(u_i=h_i,\) i.e. they are minimal generators of the semimodule of syzygies.
\end{rem}

Finally, we recall a few properties of the \(u_i\)'s previously defined which will be very useful in the sequel. These properties were already given by Delorme {delorme78} in a different context:
\begin{lemma}\cite[Lemma~10]{delorme78} \label{lem:1}%\label{lemma10delorme} 
Let $p,q\in\mathbb{Z}$ be such that $|p-q|\notin\Gamma$. We set
	$$
	u:=\min_{\leq_{\mathbb{N}}}\{(\Gamma+p)\cap(\Gamma+q)\}
	$$
	as well as $\bar{u}:=u+c(\Gamma)-\alpha\beta$, $v:=p+q+\alpha\beta-u$ and $\bar{v}:=v+c(\Gamma)-\alpha\beta$. Then we have:
	\begin{enumerate}
		\item $(\Gamma+p)\cap(\Gamma+q)=(\Gamma+u)\cup(\Gamma+v)$,
		\item $(\Gamma+p)\cup(\Gamma+q)=(\Gamma+u-\alpha\beta)\cap(\Gamma+v-\alpha\beta)$,
		\item $\mathbb{N}+\bar{v}\subset(\Gamma+p)\cup(\Gamma+q)$,
		\item $(\mathbb{N}+\bar{u})\cap((\Gamma+p)\cup(\Gamma+q))=(\mathbb{N}+\bar{u})\cap(\Gamma+v-\alpha\beta)$.
	\end{enumerate}
\end{lemma}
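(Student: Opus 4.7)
My proof proposal proceeds by proving (1) first and then deducing (2), (3), and (4) from it via the conductor bound $c(\Gamma)=(\alpha-1)(\beta-1)$. Throughout, the hypothesis $|p-q|\notin\Gamma$ is used to exclude degenerate coincidences; in particular it prevents $u$ from coinciding with either $p$ or $q$.

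For (1), the first step is to use the minimality of $u$ to pin down the structure of $u-p$ and $u-q$. Since none of $u-\alpha\beta,u-\alpha,u-\beta$ belongs to $(\Gamma+p)\cap(\Gamma+q)$, the first of these observations forces at least one of $u-p,u-q$---say $u-q$---to admit a unique reduced representation $u-q=a_2\alpha+b_2\beta$ with $0\leq a_2<\beta$ and $0\leq b_2<\alpha$. A case analysis then shows: if $a_2\geq 1$ then $u-\alpha\in\Gamma+q$, so by minimality $u-\alpha\notin\Gamma+p$, which forces $u-p$ to be a pure multiple $b_1\beta$ with $0\leq b_1<\alpha$; analogously, if $b_2\geq 1$ then $u-p$ is a pure multiple $a_1\alpha$ with $a_1<\beta$. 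The hypothesis $|p-q|\notin\Gamma$ then excludes both the case $a_2=b_2=0$ (which would force $u=q$ and $p-q\in\Gamma$) and the case $a_2,b_2\geq 1$ (which would force $u=p$ and $q-p\in\Gamma$). It follows that one of $\{u-p,u-q\}$ is a pure non-negative multiple of $\alpha$ and the other a pure non-negative multiple of $\beta$, so both $v-p=\alpha\beta-(u-q)$ and $v-q=\alpha\beta-(u-p)$ are non-negative multiples of $\alpha$ or $\beta$ and hence in $\Gamma$. So $v\in(\Gamma+p)\cap(\Gamma+q)$, giving one inclusion of (1). For the reverse inclusion, I take $n$ in the intersection with canonical representation $n-p=A_1\alpha+B_1\beta$, and split according to whether $(A_1,B_1)$ dominates the coordinates of $u-p$: in the dominating case one reads off $n\in\Gamma+u$ directly; in the opposite case a coordinate shift using $n-q\in\Gamma$ together with the explicit form of $v-q$ places $n$ in $\Gamma+v$.

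For (2), the $\subseteq$ inclusion is immediate from (1) by computing, for $g\in\Gamma$, $(g+p)-(u-\alpha\beta)=g+(v-q)\in\Gamma$ and $(g+p)-(v-\alpha\beta)=g+(u-q)\in\Gamma$, and symmetrically for $\Gamma+q$. For $\supseteq$, given $n$ in the right-hand side with both $n-p,n-q\notin\Gamma$, the reduced representations of $n+\alpha\beta-p$ and $n+\alpha\beta-q$, together with the explicit form of $v-q$ derived in (1), yield incompatible coordinate bounds---a contradiction. Parts (3) and (4) are short: since $u\leq v$ (by minimality of $u$), for any $n\geq\bar v$ both $n-(u-\alpha\beta)$ and $n-(v-\alpha\beta)$ exceed $c(\Gamma)$ and hence lie in $\Gamma$, so $n\in(\Gamma+p)\cup(\Gamma+q)$ by (2); the same estimate shows $\mathbb{N}+\bar u\subseteq\Gamma+u-\alpha\beta$, so intersecting both sides of (2) with $\mathbb{N}+\bar u$ eliminates the factor $\Gamma+u-\alpha\beta$ and yields (4).

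The main obstacle is the structural analysis in (1) locating $u-p$ and $u-q$ along the two ``coordinate axes'' of the lattice $\{(a,b):a,b\geq 0\}$. This requires combining the minimality of $u$ in three independent directions ($\alpha$, $\beta$, and $\alpha\beta$) with the hypothesis $|p-q|\notin\Gamma$. Once this structural fact is in hand, (2), (3), and (4) follow by routine bookkeeping with canonical representations and the conductor bound.
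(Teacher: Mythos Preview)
The paper does not give its own proof of this lemma; it is quoted verbatim from Delorme \cite[Lemma~10]{delorme78} and used as a black box. Your argument supplies a self-contained proof and is essentially correct.

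Two small slips in part (1): in the degenerate case $a_2=b_2=0$ one obtains $u=q$ and hence $q-p=u-p\in\Gamma$ (you wrote $p-q$), and in the case $a_2,b_2\geq 1$ one obtains $u=p$ and hence $p-q=u-q\in\Gamma$ (you wrote $q-p$); both still contradict $|p-q|\notin\Gamma$, so the logic is unaffected. The bounds $b_1<\alpha$ (when $a_2\geq 1$) and $a_1<\beta$ (when $b_2\geq 1$) are true but merit one extra line: for instance, from $u-\alpha-p\notin\Gamma$ and $u-p=b_1\beta$ one rewrites $b_1\beta-\alpha=(\beta-1)\alpha+(b_1-\alpha)\beta$, and since $0\leq\beta-1<\beta$ this is the canonical representation, forcing $b_1-\alpha<0$. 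With these points in hand, the reverse inclusion in (1) and the deductions of (2)--(4) from (1) via the conductor identity $c(\Gamma)=\alpha\beta-\alpha-\beta+1$ go through exactly as you sketch.
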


%Besides, write \(h_0<h_1<\dots<h_s\) for the (ordered) generators of the syzygy module \(\mathrm{Syz}(\Delta)\). This yields another characterization for the syzygies of \(\Delta\):

%\begin{proposition}\label{chacsyz}
%Under the hypothesis of \emph{(\ref{maltese})} we have:
%\[\mathrm{Syz}(\Delta) = \bigcup_{k=1}^{s} (\Gamma + u_k) \cup (\Gamma + h_s).\]
%\end{proposition}
%\begin{proof}
%Let us proceed by induction on $s$. For $s=1$ we have $E_1=\Gamma \cup (\Gamma + g_1)$ and $\mathrm{Syz}(E_1)=(\Gamma + h_0) \cup (\Gamma + h_1)$. Since $u_1=\min\{ h_0, h_1\}=h_0$, the result holds. Assume now the statement to hold for $s-1$, and consider 
%\end{proof}

%Now, we are going to show that we can compute the minimal set of generators of the syzygies semimodule \(\{h_0,\dots,h_n\}\) in an alternative way. We base our characterization on the following lemma given by Delorme {\cite[Lemma.~10]{delorme78}}:

\subsection{Increasing semimodules}

From now on, and throughout the paper, when referring to the minimal set of generators \(\{g_0,g_1,\dots,g_s\}\) of a \(\Gamma\)--semimodule \(\Delta,\) we will assume that it is ordered by the natural ordering \(g_0<_{\mathbb{N}} g_1<_{\mathbb{N}}\cdots<_{\mathbb{N}}g_s\) unless we mention the contrary. For the scope of this paper, we are interested in a particular subset of semimodules:

\begin{defin}\label{defin:increasingsemimod}
	Let \(\Gamma\) be a numerical semigroup minimally generated by \(\langle\alpha,\beta\rangle.\) A \(\Gamma\)--semimodule $L$ is called an \textit{increasing semimodule} if it satisfies the following property:
	\begin{equation}\label{maltese}\tag{\(\maltese\)}
		\begin{split}
			&\text{If $L$ has minimal set of generators} \;\{g_0=0,g_1,\dots,g_s\}\\
			&\text{and we put}\;g_{s+1}=\infty,\;u_0=0,\\
			&\text{then for all $0\leq i\leq s$ we have} \;\;g_{i+1}>u_i,\\
			&\text{where $u_i=\min\{(\Gamma+g_i)\cap E_{i-1}\}$}\;\text{for $1\leq i\leq s$} \\
			& \text{and 
				$E_i=\bigcup_{0\leq j\leq i} (\Gamma+g_j)$ for $0\leq i\leq s$.}
		\end{split}
	\end{equation}
\end{defin}

\begin{rem}
	Our definition is in the setting of a numerical semigroup minimally generated by two elements since, the value sets of K\"{a}hler differentials for curve singularities with more than one Puiseux pair are not in general increasing semimodules: let us consider an irreducible plane curve singularity with semigroup \(\langle 4,6,17\rangle\) with equation 
	\[f(x,y)=(y^2-x^3)^2-x^7y.\]
	With the aid of \textsf{Singular} \cite{singular}, we can calculate the minimal generators of the module of K\"{a}hler differentials by computing a standard basis of the ideal \((f,\partial f/\partial x,\partial f/\partial y).\) Once we have obtained the standard basis, it is easy to check that the normalized set of values is minimally generated by \(\{g_0=0,g_1=2,g_2=11,g_3=13\}.\) A straightforward computation shows that \hbox{ \(u_2=\min\{E_1\cap \Gamma+g_2\}=17>g_3.\)} However, it would be certainly interesting to explore the properties of the increasing semimodules defined over a numerical semigroup with arbitrarily many minimal generators.
\end{rem}

Before continuing, let make us clear that the class of increasing semimodules is non-empty:
\begin{lemma}\label{lem:semitwogen}
	Any normalized semimodule with two generators is increasing.
\end{lemma}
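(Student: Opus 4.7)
The plan is to verify the defining condition \eqref{maltese} directly for a normalized $\Gamma$-semimodule $\Delta$ with minimal system of generators $\{g_0, g_1\}$, i.e., with $s=1$. By the convention fixed at the beginning of the subsection, the generators are ordered by $g_0 <_{\mathbb{N}} g_1$, and since $\Delta$ is normalized, $g_0 = 0$. The two inequalities that need to be checked are the ones indexed by $i=0$ and $i=1$.

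For $i = 0$, the condition reads $g_1 > u_0 = 0$, which holds because $g_1$ is a minimal generator distinct from $g_0 = 0$, hence a positive integer. For $i = 1 = s$, the condition reads $g_{s+1} = \infty > u_1$, which is automatic from the convention $g_{s+1} = \infty$ and the fact that $u_1$ is (by construction, as the minimum of a non-empty subset of $\mathbb{N}$) a finite natural number.

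I expect no real obstacle here: the statement is essentially the observation that the increasing property is vacuous beyond the first generator when there are only two generators, so no non-trivial information about the arithmetic of $\Gamma$ or of the minimum-intersection elements $u_i$ ever gets used. The proof will therefore consist simply of unpacking Definition \ref{defin:increasingsemimod} in the case $s=1$ and checking the two cases above.
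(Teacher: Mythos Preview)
Your proof is correct and follows essentially the same approach as the paper: both arguments simply unpack Definition~\ref{defin:increasingsemimod} for $s=1$ and note that the two required inequalities are trivial because $u_0=0$ and $g_{s+1}=\infty$. The paper's version is slightly terser, but the content is identical.
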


\begin{proof}
	Because of the minimal set of generators is of the form $[0,g]$ with $g\in\mathbb{N}\setminus\Gamma$, then condition \eqref{maltese} trivially holds since $g_2=\infty$ and 
	$u_0=0$
\end{proof}

Also, an immediate consequence of Delorme's Theorem \ref{thm:valuesemimod} is the following 
\begin{corollary}
	Let be \(\Delta=v(R+Rdy/dx)\), where \(R\) is the local ring of an irreducible plane curve singularity with one Puiseux pair \((\alpha,\beta)\). Then \(\Delta\) is an increasing \(\Gamma\)--semimodule, where \(\Gamma=\langle\alpha,\beta\rangle.\)
\end{corollary}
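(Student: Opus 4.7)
The plan is straightforward: invoke the characterization of $\Delta = v(R + R\,dy/dx)$ provided by Delorme's Theorem \ref{thm:valuesemimod} and observe that its recursive content is precisely property (\ref{maltese}). First I would recall that, for a branch with one Puiseux pair, Delorme's theorem describes the minimal system of generators $g_0 < g_1 < \cdots < g_s$ of $\Delta$ by an iterative procedure: starting from $g_0 = 0$, each subsequent $g_{i+1}$ is produced as the $v$-value of an appropriate Kähler differential whose order is required to be strictly larger than a concrete threshold computed from the previously constructed generators $g_0,\ldots,g_i$.

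Next I would identify that threshold with our $u_i$. By Proposition \ref{prop:uisyz}, combined with Lemma \ref{lem:1} (which is Delorme's own Lemma~10), the number
\[
u_i = \min\nolimits_{\leq_{\mathbb{N}}}\{(\Gamma+g_i)\cap E_{i-1}\}
\]
coincides with the smallest value at which the orbit $\Gamma+g_i$ first meets the union $E_{i-1}=\bigcup_{0\leq j\leq i-1}(\Gamma+g_j)$ of the previously produced orbits. The very meaning of Delorme's inductive step is that $g_{i+1}$ must not belong to $E_i=E_{i-1}\cup(\Gamma+g_i)$, since otherwise it would not be a new minimal generator; this forces exactly $g_{i+1}>u_i$ for every $0\leq i\leq s-1$.

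Finally I would dispose of the boundary cases. For $i=0$ one has $u_0=0$ and $g_1$ is a positive gap of $\Gamma$, so $g_1>u_0$ trivially. For $i=s$ the convention $g_{s+1}=\infty$ makes $g_{s+1}>u_s$ automatic. Thus all inequalities in (\ref{maltese}) hold, and $\Delta$ is an increasing $\Gamma$-semimodule.

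I expect the only real obstacle to be bibliographical rather than mathematical: one must translate Delorme's original recursive construction in \cite{delorme78}, which is phrased via congruences modulo $\alpha$ and $\beta$ together with the auxiliary quantities $u,v,\bar u,\bar v$ of Lemma \ref{lem:1}, into the compact formulation of (\ref{maltese}) that uses the single datum $u_i$. Once this dictionary is made explicit through Proposition \ref{prop:uisyz}, the corollary is indeed immediate, and no further computation is required.
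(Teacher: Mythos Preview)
Your overall plan---invoke Delorme's Theorem~\ref{thm:valuesemimod} and read off property~(\ref{maltese})---is the same as the paper's, and the boundary cases $i=0$ and $i=s$ are handled correctly. But the central implication in your second paragraph is wrong. You write that ``$g_{i+1}$ must not belong to $E_i$ \ldots; this forces exactly $g_{i+1}>u_i$''. The premise $g_{i+1}\notin E_i$ holds for the minimal generators of \emph{any} $\Gamma$-semimodule, increasing or not, and by itself it places no lower bound on $g_{i+1}$. The paper's own counterexample $[0,6,8,9]$ over $\langle 5,7\rangle$ illustrates this: $g_2=8\notin E_1$, yet $u_1=\min\{\Gamma\cap(\Gamma+6)\}=20>8$. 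If your argument were valid it would show every $\Gamma$-semimodule is increasing.

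What actually produces the inequality is the one piece of Theorem~\ref{thm:valuesemimod} you do not use: the relation $\omega_{i+1}=\sum_{j\le i}F_{j,i}\,\omega_j$ together with $u_i=\inf_j\{v(F_{j,i})+g_j\}$. The ultrametric property of the valuation then gives
\[
g_{i+1}=v(\omega_{i+1})\ \ge\ \min_{j}\bigl\{v(F_{j,i})+v(\omega_j)\bigr\}=u_i,
\]
and strictness follows because $u_i\in E_i$ while $g_{i+1}\notin E_i$. This is exactly the paper's proof. Your appeals to Proposition~\ref{prop:uisyz} and Lemma~\ref{lem:1} are unnecessary: in the paper's statement of Theorem~\ref{thm:valuesemimod} the symbol $u_i$ already denotes the combinatorial quantity $\min\{(\Gamma+g_i)\cap E_{i-1}\}$, so no separate ``dictionary'' is needed.
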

\begin{proof}
	By Theorem \ref{thm:valuesemimod}, the generators \(w_i\) of \(R+Rdy/dx\) satisfy \mbox{\(\displaystyle \omega_{i+1}=\sum_{0\leq j\leq i}F_{j,i}\omega_{j}\)} where \(F_{j,i}\in R,\)  \(u_i=v(F_{i,i})+g_i=\inf_j\{v(F_{j,i})+g_j\}\) and \(v(\omega_{i+1})=g_{i+1}.\) Since 
	$$
	v(w_{i+1})\geq \min\{v(F_{j,i})+v(w_j),
	$$
 the claim follows.
\end{proof}

Moreover, the set of increasing semimodules is a \emph{proper} non-empty subset in the set of \(\Gamma\)-semimodules: consider for instance the \(\Gamma\)--semimodule minimally generated by \(\{0,6,8,9\}\) (see Figure \ref{fig:lattice1}); this is not an increasing semimodule since \(\inf(\langle 5,7\rangle \cap(\langle 5,7\rangle+6))=20>8\). 
\medskip

To finish with this section, we prove a rather technical lemma which will be useful later.
\begin{lemma}\label{lem:3}
	\label{cslemma} Let be $\Gamma=\langle \alpha,\beta\rangle$. Let $L$ be an increasing \(\Gamma\)-semimodule with $\{g_0=0,g_1,\dots,g_s\}$, we set \(g_{s+1}=\infty\) and \(\bar{u}_i:=u_i+c(\Gamma)-\alpha\beta\). Then, for any $i=0,\dots,s-1$
	there exists an element \(c_i\in (-\Gamma)\), namely \(c_i=c_{i-1}+g_i-u_i\), such that
	$$
	(\mathbb{N}+\bar{u}_i)\cap E_i=(\mathbb{N}+\bar{u}_i)\cap(\Gamma+c_i).
	$$
\end{lemma}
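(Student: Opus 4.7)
The plan is to proceed by induction on $i$, with Delorme's Lemma \ref{lem:1} doing the main work at the inductive step. The base case $i=0$ is trivial: since $E_0 = \Gamma$ and $u_0 = 0$, the choice $c_0 := 0$ lies in $-\Gamma$ and yields the required equality tautologically (this is consistent with the recursion under the convention $c_{-1} := 0$, since $g_0 = u_0 = 0$).

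For the inductive step, I would assume the statement at level $i-1$ with an element $c_{i-1}\in -\Gamma$ and apply Lemma \ref{lem:1} to the pair $p = g_i$, $q = c_{i-1}$. First I have to check that $|g_i - c_{i-1}| = g_i - c_{i-1} \notin \Gamma$. If it were, then $g_i\in\Gamma+c_{i-1}$; now the increasing condition \eqref{maltese} gives $g_i>u_{i-1}$, and $u_{i-1}>\bar u_{i-1}$ because $c(\Gamma)<\alpha\beta$, so the induction hypothesis would force $g_i\in E_{i-1}$, contradicting the minimality of $g_i$ as a generator of $L$. Setting $u':=\min_{\leq_\mathbb{N}}\{(\Gamma+g_i)\cap(\Gamma+c_{i-1})\}$ and $v':=g_i+c_{i-1}+\alpha\beta-u'$, parts (2) and (4) of Lemma \ref{lem:1} then yield
$$(\mathbb{N}+\bar u')\cap[(\Gamma+g_i)\cup(\Gamma+c_{i-1})]=(\mathbb{N}+\bar u')\cap(\Gamma+v'-\alpha\beta).$$

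The delicate point is to identify $u'$ with $u_i$, which in turn forces $v'-\alpha\beta=c_{i-1}+g_i-u_i=c_i$. On the one hand, $u_i\in E_{i-1}\cap(\Gamma+g_i)$ and $u_i\geq g_i>\bar u_{i-1}$, so by induction hypothesis $u_i\in\Gamma+c_{i-1}$, hence $u'\leq u_i$. Conversely, $u'\geq g_i>\bar u_{i-1}$, so the induction hypothesis places $u'$ back in $E_{i-1}$, giving $u'\in(\Gamma+g_i)\cap E_{i-1}$ and $u'\geq u_i$. Once $u'=u_i$ is established, the identity $-c_i=(u_i-g_i)+(-c_{i-1})$, with both summands in $\Gamma$, shows $c_i\in -\Gamma$. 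Finally, since $\bar u_i>\bar u_{i-1}$, the induction hypothesis reduces $(\mathbb{N}+\bar u_i)\cap E_{i-1}$ to $(\mathbb{N}+\bar u_i)\cap(\Gamma+c_{i-1})$; splitting $E_i=E_{i-1}\cup(\Gamma+g_i)$ and invoking the displayed consequence of Lemma \ref{lem:1}(4) above then delivers
$$(\mathbb{N}+\bar u_i)\cap E_i=(\mathbb{N}+\bar u_i)\cap(\Gamma+c_i).$$
I expect the main obstacle to be the identification $u'=u_i$, since that is exactly the place where the increasing hypothesis \eqref{maltese} and the induction hypothesis must interact; once this is set up correctly, everything else is formal application of Delorme's lemma.
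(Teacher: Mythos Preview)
Your proof is correct and follows essentially the same route as the paper: induction together with Delorme's Lemma~\ref{lem:1} applied to the pair $(g_i,c_{i-1})$, after using the increasing hypothesis and the induction hypothesis to verify that $g_i-c_{i-1}\notin\Gamma$. In fact your write-up is more careful than the paper's own proof on the point you flagged as delicate---the identification $u'=u_i$---which the paper asserts without the two-inequality argument you supply.
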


\begin{proof}
	We proceed by induction on $s$. The case $s=1$ is easily deduced from Lemma \ref{lem:1} with $p=g_0=0$, $q=g_1$; then $u_1=\min\{(\Gamma+g_1)\cap \Gamma\}\in \Gamma$, therefore it is enough to consider
	\begin{align*}
		c_1&=v-\alpha \beta =g_0+g_1+\alpha\beta-u_1-\alpha \beta \\
		& =g_1-u_1\in (-\Gamma).
	\end{align*}
	Since $\overline{u}_{s}<u_{s}<g_{s+1}$, we have $g_{s+1}\in \mathbb{N}+\overline{u}_{s}$; this together with the fact that $g_{s+1}\notin E_{s}$ implies that $g_{s+1}\notin \mathbb{N}+c_{s}$ by induction hypothesis. Therefore $g_{s+1}-c_{s}\notin \Gamma$ and we apply Lemma \ref{lem:1} (4) with $p=g_{s+1}$ and $q=c_{s}$, so that $u_{s+1}=\min\{(\Gamma+g_{s+1})\cap E_{s}\}$. If we set $c_{s+1}=g_{s+1}-u_{s+1}+c_{s}$, then Lemma \ref{lem:1} again yields the equality
	$$
	(\mathbb{N}+\overline{u}_{s+1})\cap E_{s+1} = (\mathbb{N}+\overline{u}_{s+1}) \cap (\Gamma+c_{s+1}),
	$$
	as desired. 
\end{proof}
\begin{rem}
	Lemma \ref{lem:3} was proven by Delorme \cite[Lemma 12 (a)]{delorme78} for the case of an increasing semimodule with \(g_1=\beta-\alpha.\)
\end{rem}

\begin{rem}
	By the proof of Lemma \ref{lem:3} itself, we observe that the statement of that result still holds if we substitute the assumption \(g_{i+1}>u_i\) for all \(i\) by the hypothesis $g_{i+1}>\overline{u}_{i}$ for all \(i\).
\end{rem}

\subsection{Lattice paths of increasing semimodules}\label{subsec:constmaltes}

To conclude the section, we are going to show a procedure to construct any lattice path associated to an increasing semimodule. Recall that we are assuming that the minimal system of generators \(g_0=0, g_1, \ldots, g_s\) of a \(\Gamma\)--semimodule is ordered by the natural ordering  \(g_0=0<_\mathbb{N} g_1<_\mathbb{N}\cdots <_\mathbb{N} g_s\).
\medskip

An easy consequence of Lemma \ref{lem:semitwogen} is that any lattice path with a unique ES-turn is an increasing semimodule \(\Delta^{(1)}\) generated by \(g_0=0,\,g_1\in\mathbb{N}\setminus\Gamma\). Let us write \(J=\{h_0,h_1\}\) for the minimal set of generators of the syzygy semimodule $\mathrm{Syz}(\Delta^{(1)})$. Therefore, it is a straightforward computation to check that \(\mathrm{min}\{h_0,h_1\}=\mathrm{min}\{\Gamma\cap (\Gamma+g_1)\}\). 
\medskip

Write \(u_1=\mathrm{min}\{h_0,h_1\}\). There exists an increasing semimodule \(\Delta^{(2)}\) with three generators containing \(\Delta^{(1)}\) if and only if there is an element \(g_2\in\mathbb{N}\setminus\Delta^{(1)}\) with \(g_2> u_1\) such that 
$$
\Delta^{(2)}=\Delta^{(1)}\cup(\Gamma+g_2). 
$$

Since \(u_1\) is a generator of the syzygy module and \(g_2\) must be a gap, then \(g_2>u_1\) means 
$$
g_2\in\{(a,b)\in\mathbb{N}^2 : \;u_1<\alpha\beta-a\alpha-b\beta\}.
$$

On the other hand, the condition \(g_2\in\mathbb{N}\setminus\Delta^{(1)}\) means that \(g_2\) is a point above the lattice path associated to \(\Delta^{(1)}\). Let us denote by \(L(\Delta^{(1)})^{+}\) the region above the lattice path associated to \(\Delta^{(1)}\). Hence the existence of \(\Delta^{(2)}\) is equivalent to 
$$
L(\Delta^{(1)})^{+}\cap\{(a,b)\in\mathbb{N}^2: \;u_1<\alpha\beta-a\alpha-b\beta\}\neq \emptyset.
$$

So, let us assume that we start with \(\Delta^{(i-1)}\) minimally generated by \(I=\{g_0=0,g_1,\dots,g_{i-1}\}\).  Consider \(u_{i-1}=\mathrm{min}\{(\Gamma+g_{i-1})\cap E_{i-2}\}\). Observe that, since the \(u_i\)'s coincide with SE-turns by construction, the syzygy module \(\mathrm{Syz}(\Delta^{(i-1)})\) is minimally generated by
$$
\{ u_1,\dots,u_{i-1},M\},
$$
where \(M=c(\Delta^{(i-1)})+\alpha+\beta-1\). We observe that we are ordering \(g_i\) here by the natural order in \(\mathbb{N}\), and this ordering does not necessarily coincide with the order \(\prec\). So, the indices in the minimal set of the syzygies may not coincide with those in Proposition \ref{defin:syz}. 
\medskip

As before, let us denote \(L(\Delta^{(i-1)})^{+}\) the region above the lattice path associated to \(\Delta^{(i-1)}\), then, there is a \(g_i\in\mathbb{N}\setminus \Gamma\) with \(g_i>u_{i-1}\) if and only if
$$
L(\Delta^{(i-1)})^{+}\cap\{(a,b)\in\mathbb{N}^2 :\;u_{i-1}<\alpha\beta-a\alpha-b\beta\}\neq \emptyset.
$$

The previous construction gives us the following immediate consequence:
\begin{proposition}\label{uisyz2}
	Let \(\Gamma=\langle\alpha,\beta\rangle\) and et \(\Delta\) be an increasing \(\Gamma\)--semimodule minimally generated by \hbox{\(\{g_0=0<_{\mathbb{N}}\cdots<_\mathbb{N}g_s\}.\)} Then, \(u_1<_{\mathbb{N}}\cdots<_\mathbb{N}u_s<_\mathbb{N}M:=c(\Delta)+\alpha+\beta-1\) are the minimal set of generators of \(\operatorname{Syz}(\Delta).\)
\end{proposition}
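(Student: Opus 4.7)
The plan is to prove the proposition by induction on $s$, the number of nonzero minimal generators of $\Delta$, where the truncated semimodule at step $i$ is $\Delta^{(i)}:=\Gamma\cup(\Gamma+g_1)\cup\cdots\cup(\Gamma+g_i)$. The main ingredients will be Lemma \ref{lem:1} (decomposing a pairwise intersection $(\Gamma+p)\cap(\Gamma+q)$, with $|p-q|\notin\Gamma$, as a union of two $\Gamma$-cosets), Lemma \ref{lem:3} (past the threshold $\bar{u}_i$ identifying $E_i$ with a shifted $\Gamma$-coset $\Gamma+c_i$), Proposition \ref{defin:syz} (guaranteeing that $\operatorname{Syz}(\Delta)$ admits exactly $s+1$ minimal generators), and Theorem \ref{formulaconductor} (identifying the largest minimal syzygy generator as $M=c(\Delta)+\alpha+\beta-1$).

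First I would establish the strict monotonicity $u_1<_{\mathbb{N}} u_2<_{\mathbb{N}}\cdots<_{\mathbb{N}} u_s$. The increasing-semimodule condition gives $g_{i+1}>u_i$, and since $g_{i+1}$ is a minimal generator it lies in $(\Gamma+g_{i+1})\setminus E_i$, so that $u_{i+1}=\min\{(\Gamma+g_{i+1})\cap E_i\}>g_{i+1}$. Chaining yields $u_i<g_{i+1}<u_{i+1}$; moreover each $u_i$ belongs to $\operatorname{Syz}(\Delta)$ by its very definition.

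Next I would build up the generating description of $\operatorname{Syz}(\Delta)$ inductively. For the base case $s=1$, Lemma \ref{lem:1}(1) applied to $p=0$ and $q=g_1$ yields $\operatorname{Syz}(\Delta)=\Gamma\cap(\Gamma+g_1)=(\Gamma+u_1)\cup(\Gamma+v_1)$ with $v_1=g_1+\alpha\beta-u_1$, and Theorem \ref{formulaconductor} identifies $v_1$ with $M$. For the inductive step, set $\Delta':=\Delta^{(s-1)}$ and assume $\operatorname{Syz}(\Delta')=\bigcup_{k=1}^{s-1}(\Gamma+u_k)\cup(\Gamma+M')$ with $M'=c(\Delta')+\alpha+\beta-1$. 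Since $\operatorname{Syz}(\Delta)=\operatorname{Syz}(\Delta')\cup((\Gamma+g_s)\cap E_{s-1})$, I apply Lemma \ref{lem:3} to $\Delta'$ to obtain $c_{s-1}\in(-\Gamma)$ satisfying $(\mathbb{N}+\bar{u}_{s-1})\cap E_{s-1}=(\mathbb{N}+\bar{u}_{s-1})\cap(\Gamma+c_{s-1})$. The hypothesis $g_s>u_{s-1}>\bar{u}_{s-1}$ forces $(\Gamma+g_s)\cap E_{s-1}=(\Gamma+g_s)\cap(\Gamma+c_{s-1})$, which Lemma \ref{lem:1}(1) then splits as $(\Gamma+u_s)\cup(\Gamma+v_s)$ with $v_s=g_s+c_{s-1}+\alpha\beta-u_s$. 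Hence
\begin{equation*}
\operatorname{Syz}(\Delta)=\bigcup_{k=1}^{s}(\Gamma+u_k)\cup(\Gamma+M')\cup(\Gamma+v_s).
\end{equation*}

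To conclude, Proposition \ref{defin:syz} guarantees that $\operatorname{Syz}(\Delta)$ has exactly $s+1$ minimal generators, while the union above exhibits $s+2$ candidates; therefore one of $\{M',v_s\}$ is absorbed by the remaining generators. The surviving element is necessarily $M=c(\Delta)+\alpha+\beta-1$ by Theorem \ref{formulaconductor}, yielding the strict inequality $u_s<_{\mathbb{N}} M$ and closing the proof. The main obstacle will be twofold: first, verifying that Lemma \ref{lem:1} genuinely applies in the inductive step, which requires checking that $g_s-c_{s-1}\notin\Gamma$ so that the intersection honestly splits into two distinct cosets; and second, tracking algebraically which of $M'$ or $v_s$ is absorbed. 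Geometrically this absorption corresponds to the fact that inserting the new ES-turn $g_s$ into the lattice path removes one old SE-turn and creates two new ones, giving a net gain of precisely one minimal syzygy generator as predicted by Proposition \ref{defin:syz}.
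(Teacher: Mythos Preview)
Your route differs from the paper's. The paper derives the proposition directly from the lattice-path construction of \S\ref{subsec:constmaltes}: adding the $g_i$ one at a time as ES-turns, one observes (via Proposition~\ref{prop:uisyz} and the remark after Proposition~\ref{defin:syz}) that the $u_i$'s coincide with the SE-turns of the resulting path, and SE-turns are by construction the minimal syzygy generators. No bookkeeping of absorbed generators is required. Your algebraic induction through Lemmas~\ref{lem:1} and~\ref{lem:3} is a legitimate alternative that does not lean on the lattice picture, and your first flagged obstacle ($g_s-c_{s-1}\notin\Gamma$) dissolves on inspection: it is exactly the statement verified inside the inductive step of the proof of Lemma~\ref{lem:3}.

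The second obstacle, however, is a genuine gap in the proposal as written. From ``$\operatorname{Syz}(\Delta)$ has $s+1$ minimal generators and is covered by the $s+2$ cosets $\Gamma+u_1,\dots,\Gamma+u_s,\Gamma+M',\Gamma+v_s$'' you conclude that the redundant coset is one of the last two; but the counting alone does not exclude that some $u_i$ is the one absorbed into another coset. Even after invoking Theorem~\ref{formulaconductor} to pin down $M=c(\Delta)+\alpha+\beta-1$ as one minimal generator, you still owe an argument that the remaining $s$ minimal generators are precisely $u_1,\dots,u_s$ --- for instance by checking directly that $|u_i-u_j|\notin\Gamma$ for $i\neq j$ and that no $u_i$ lies in $\Gamma+M$. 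This is where the paper's geometric approach pays off: once the $u_i$ are identified with distinct SE-turns of the lattice path, $\Gamma$-leanness (and hence minimality) is automatic.
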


\begin{rem}
	Proposition \ref{uisyz2} shows that, different from Proposition \ref{defin:syz}, the minimal set of generators of the syzygy semimodule of an increasing semimodule can be obtained with the natural ordering. Obviously the labeling may differ from the order established in the lattice path.
\end{rem}
\begin{ex}\label{ex:220}
To see how the ordering in the labeling of the minimal generators of the syzygy semimodule may differ, let us consider \(\Gamma=\langle 7,9\rangle\) and the increasing \(\Gamma\)--semimodule \(\Delta\) minimally generated by $0, 5, 20$ and $31$. (This will appear in Example \ref{ex:constincreasing} again). Observe that \(\operatorname{Syz}(\Delta)\) is minimally generated by \(u_1=14,u_2=27,u_3=38\) and \(c(\Delta)+\alpha+\beta-1=40.\) If we order the minimal generators of the syzygy module with the order \(\prec\) then we have \(h_0=14,h_1=40,h_2=38,h_3=27.\)
\end{ex}

The previous construction encloses a rooted tree structure over the set of incresing semimodules in terms of its first non zero minimal generator; the root corresponds to the semimodule associated to a gap of the semigroup, and represents the unique \(\Gamma\)--semimodule of the form \(\Delta=[0,g]\). We assign this to the level 0 of the tree. The next level represent the possible increasing \(\Gamma\)-semimodules with three generators $[0, g_1, g_2]$ with $g_1=g$; hence the number of leaves at this level is 
$$
|L(\Delta^{(1)})^{+}\cap\{(a,b)\in\mathbb{N}^2: \;u_1<\alpha\beta-a\alpha-b\beta\}|.
$$

In general the number of nodes at a level \(k\) represent the number of increasing \(\Gamma\)-semimodules with \(g\) as first non zero generator and \(k+2\) minimal generators. To each node at level \(k\) we attach exactly
$$
|L(\Delta^{(k+1)})^{+}\cap\{(a,b)\in\mathbb{N}^2: \;u_{k+1}<\alpha\beta-a\alpha-b\beta\}|
$$
leaves. Obviously, this tree representation is finite; observe that $k\leq \alpha$. Let us show the procedure with an example:

\begin{ex}\label{ex:constincreasing}
	Let us consider the semigroup $\Gamma=\langle 7,9\rangle$. We are going to construct all possible increasing $\Gamma$-semimodules with $g_1=5$. Since \(u_1=14,\) the first step of the above procedure says that there are eight increasing $\Gamma$-semimodules with $g_1=5$ and three minimal generators, see Figure \ref{fig:ex1}.
	%\begin{multicols}{2}
	%\centering
	
	\begin{figure}[H]
		\begin{center}
			\begin{tikzpicture}[scale=0.65]
				
				\draw[] (0,0) grid [step=1cm](9,7);
				
				\fill[line width=1.6pt,color=blue,fill=blue,pattern=north east lines,pattern color=blue] (0.,5.444444444444445) -- (0.,1.) -- (5.714285714285714,1.) -- cycle;
				\draw [] (0.,7.)-- (9.,0.);
				\draw  [](0.,7.)-- (0.,1.);
				\draw [](0.,1.)-- (7.,1.);
				\draw [] (7.,1.)-- (7.,0.);
				\draw [] (7.,0.)-- (9.,0.);
				\draw [domain=-0.5:7.5] plot(\x,{(--49.-7.*\x)/9.});
				\draw [line width=1.0pt,color=black] (0.,5.444444444444445)-- (0.,1.);
				\draw [color=black] (0.,1.)-- (5.714285714285714,1.);
				\draw [] (5.714285714285714,1.)-- (0.,5.444444444444445);
				\draw[] (0,7) -- (9,0);
				
				\draw[ultra thick] (0,7) -- (0,1) -- (7,1) -- (7,0) -- (9,0);
				\draw[fill=white] (0,7) circle [radius=0.1]; 
				
				\draw[fill] (7,1) circle [radius=0.1]; 
				\draw[fill=white] (9,0) circle [radius=0.1]; 
				\draw[fill=red] (1,2) circle [radius=0.08]; 
				\draw[fill=red] (1,3) circle [radius=0.08]; 
				\draw[fill=red] (1,4) circle [radius=0.08]; 
				\draw[fill=red] (2,2) circle [radius=0.08]; 
				\draw[fill=red] (2,3) circle [radius=0.08]; 
				\draw[fill=red] (3,2) circle [radius=0.08]; 
				\draw[fill=red] (3,3) circle [radius=0.08]; 
				\draw[fill=red] (4,2) circle [radius=0.08]; 
				
			\end{tikzpicture}
			\caption{Lattice path for the $\langle 7,9 \rangle$-lean set $[0,5]$ and the eight candidates for \(g_2\) in red.}\label{fig:ex1}
		\end{center}
	\end{figure}
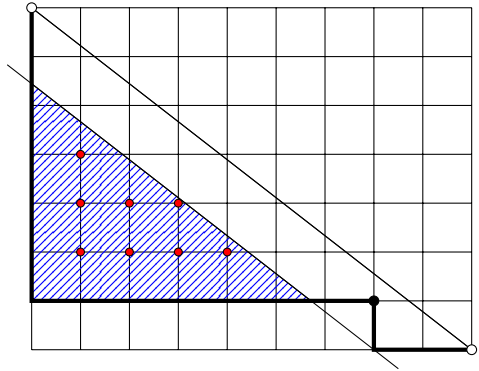

	As a second step, let us choose $g_2=20$ as minimal generator. Then, \(u_2=27\) and the following figure shows that there is only one increasing $\Gamma$-semimodule with $g_1=5,\,g_2=20$ and $4$ minimal generators in total, namely $\Delta=[0,5,20,31]$, which has \(u_3=38.\) (This semimodule has already appeared in Example \ref{ex:220}).

	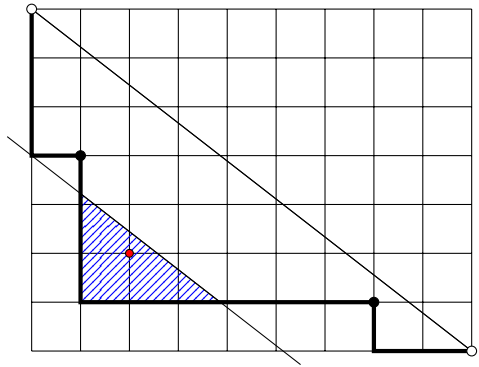
\begin{figure}[H]
		\begin{center}
			\begin{tikzpicture}[scale=0.65]
				\draw[] (0,0) grid [step=1cm](9,7);
				\draw[] (0,7) -- (9,0);
				\fill[line width=1.6pt,color=blue,fill=blue,pattern=north east lines,pattern color=blue] (1.,3.2222222222222223) -- (1.,1.) -- (3.857142857142857,1.) -- cycle;
				\draw [] (0.,7.)-- (9.,0.);
				\draw [] (7.,1.)-- (7.,0.);
				\draw [] (7.,0.)-- (9.,0.);
				\draw [] (0.,7.)-- (0.,4.);
				\draw [] (0.,4.)-- (1.,4.);
				\draw [] (1.,4.)-- (1.,1.);
				\draw [] (1.,1.)-- (7.,1.);
				\draw [domain=-0.5:5.5] plot(\x,{(--36.-7.*\x)/9.});
				\draw [color=black] (1.,3.2222222222222223)-- (1.,1.);
				\draw [color=black] (1.,1.)-- (3.857142857142857,1.);
				\draw [color=black] (3.857142857142857,1.)-- (1.,3.2222222222222223);
				\draw[ultra thick] (0,7) -- (0,4) -- (1,4) -- (1,1) -- (7,1) -- (7,0) -- (9,0);
				\draw[fill=white] (0,7) circle [radius=0.1]; 
				\draw[fill] (1,4) circle [radius=0.1]; 
				\draw[fill=red] (2,2) circle [radius=0.08]; 
				\draw[fill] (7,1) circle [radius=0.1]; 
				\draw[fill=white] (9,0) circle [radius=0.1]; 	
				%\node [below right][DimGray] at (0.15,0.8) {$23$};
				%    \node [below right][DimGray] at (1.15,0.8) {$18$};
				%  \node [below right][DimGray] at (2.15,0.8) {$13$};		
			\end{tikzpicture}
			\caption{Lattice path for the $\langle 7,9 \rangle$-lean set $[0,5,20]$ and candidate for \(g_3\) in red.} \label{fig:ex 2}
		\end{center}
	\end{figure}

	%\end{multicols}
	%%%%%%%%%%%%%%%
	%%%%%%%%%%%%%%%

	Finally, it is an easy computation to see that the set of increasing \(\Gamma\)-semimodules with \(g_1=5\) has the tree structure showed in Figure \ref{fig:tree}.
	
	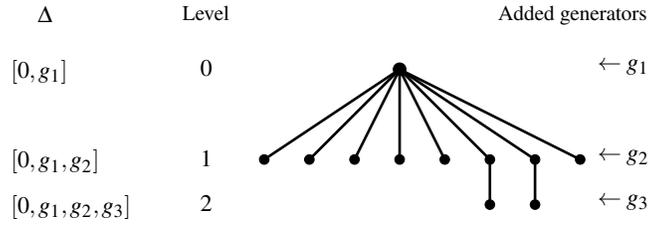
\begin{figure}[H]
		\begin{center}
			\begin{tikzpicture}[line cap=round,line join=round,>=triangle 45,x=1.0cm,y=1.0cm, scale=1.2]
				\clip(-3.,1.) rectangle (5.1,4.2);
				\draw [line width=1.pt] (2.,3.5)-- (4.,2.5);
				\draw [line width=1.pt] (2.,3.5)-- (3.5,2.5);
				\draw [line width=1.pt] (2.,3.5)-- (3.,2.5);
				\draw [line width=1.pt] (2.,3.5)-- (2.5,2.5);
				\draw [line width=1.pt] (2.,3.5)-- (2.,2.5);
				\draw [line width=1.pt] (2.,3.5)-- (1.5,2.5);
				\draw [line width=1.pt] (2.,3.5)-- (1.,2.5);
				\draw [line width=1.pt] (2.,3.5)-- (0.5,2.5);
				%\draw [line width=1.pt] (4.,2.5)-- (4.,2.);
				\draw [line width=1.pt] (3.5,2.5)-- (3.5,2.);
				%\draw [line width=1.pt] (3.5,2.)-- (3.5,1.5);
				\draw [line width=1.pt] (3.,2.5)-- (3.,2.);
				%\draw [line width=1.pt] (3.,2.)-- (3.,1.5);
				%\draw [line width=1.pt] (2.5,2.5)-- (2.5,2.);
				%\draw [line width=1.pt] (2.,2.5)-- (2.,2.);
				%\draw [line width=1.pt] (1.,2.5)-- (1.,2.);
				%\draw [line width=1.pt] (0.5,2.5)-- (0.5,2.);
				%\draw [line width=1.pt] (1.5,2.5)-- (1.5,2.);
				\begin{scriptsize}
					\draw [fill=black] (2.,3.5) circle (2.pt);
					\draw [fill=black] (4.,2.5) circle (1.5pt);
					\draw [fill=black] (3.5,2.5) circle (1.5pt);
					\draw [fill=black] (3.,2.5) circle (1.5pt);
					\draw [fill=black] (2.5,2.5) circle (1.5pt);
					\draw [fill=black] (2.,2.5) circle (1.5pt);
					\draw [fill=black] (1.5,2.5) circle (1.5pt);
					\draw [fill=black] (1.,2.5) circle (1.5pt);
					\draw [fill=black] (0.5,2.5) circle (1.5pt);
					%\draw [fill=black] (4.,2.) circle (1.5pt);
					\draw [fill=black] (3.5,2.) circle (1.5pt);
					%\draw [fill=black] (3.5,1.5) circle (1.5pt);
					\draw [fill=black] (3.,2.) circle (1.5pt);
					%\draw [fill=black] (3.,1.5) circle (1.5pt);
					%\draw [fill=black] (2.5,2.) circle (1.5pt);
					%\draw [fill=black] (2.,2.) circle (1.5pt);
					%\draw [fill=black] (1.,2.) circle (1.5pt);
					%\draw [fill=black] (0.5,2.) circle (1.5pt);
					%\draw [fill=black] (1.5,2.) circle (1.5pt);
					\node [below right][black] at (3,4.3) {\tiny Added generators};
					\node [below right][black] at (4.1,3.7) {$\leftarrow g_1$};
					\node [below right][black] at (4.1,2.7) {$\leftarrow g_2$};
					\node [below right][black] at (4.1,2.2) {$\leftarrow g_3$};
					
					\node [below right][black] at (-0.5,4.3) {\tiny Level};
					\node [below right][black] at (-0.3,3.7) {$0$};
					\node [below right][black] at (-0.3,2.7) {$1$};
					\node [below right][black] at (-0.3,2.2) {$2$};
					
					\node [below right][black] at (-2.1,4.3) {$\Delta$};
					\node [below right][black] at (-2.4,3.7) {$[0,g_1]$};
					\node [below right][black] at (-2.4,2.7) {$[0,g_1,g_2]$};
					\node [below right][black] at (-2.4,2.2) {$[0,g_1,g_2,g_3]$};

				\end{scriptsize}
			\end{tikzpicture}
			\vspace{-0.3cm}
			\caption{Tree of increasing \(\Gamma\)-semimodules with first non zero generator \(g=5\).}\label{fig:tree}
		\end{center}
	\end{figure}
	
\end{ex}

\section{Realization of increasing semimodules as value set of \(R\)--modules} \label{sec:R-mod}

At this point, we are ready to present the main result of this paper. We are going to show that for fixed \(\Gamma\) and an increasing \(\Gamma\)-semimodule, we can find a plane curve with value semigroup \(\Gamma\), as well as non-zero functions (elements in the local ring) such that the associated \(R\)-module has as value set the given increasing semimodule.

\begin{theorem}\label{thm:existence-curve}
	Let \(\Gamma=\langle \alpha,\beta \rangle \) be a numerical semigroup with \(\alpha<\beta\). Let \(L\) be an increasing \(\Gamma\)-semimodule, and set \(b:=c(\Gamma)-\beta-1\). Then there exist a tuple \((a_1,\dots,a_b)\in\mathbb{C}^{b}\) and \(z\in\overline{R}\) such that \(L=v(R+zR)\), where \(R\) is the local ring of the germ of plane curve singularity defined by the Puiseux parameterization
	\[C:\left\lbrace\begin{array}{ll}
		x(t):=&t^\alpha\\
		y(t):=&t^\beta+\displaystyle \sum_{i=1}^{b}a_it^{i+\beta}.
	\end{array}\right.\]	
\end{theorem}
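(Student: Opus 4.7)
The plan is to proceed by induction on $s$, the number of non-zero minimal generators $g_1 <_{\mathbb{N}} \cdots <_{\mathbb{N}} g_s$ of $L$, constructing the tuple $(a_1,\ldots,a_b)$ and the element $z$ simultaneously as the induction advances. The base case $s=0$ is trivial: $L = \Gamma$, so any $z \in R$ together with any parameterization yields $v(R + zR) = v(R) = \Gamma$. For the inductive step I note that the truncation $L^{(s-1)} := [0, g_1, \ldots, g_{s-1}]$ is again an increasing $\Gamma$-semimodule, since the conditions from \eqref{maltese} at positions $\leq s-2$ are unaffected by removing the last generator. By the induction hypothesis there exist $(a_i^{(s-1)})$ and $z^{(s-1)}$ realizing $L^{(s-1)}$ on the corresponding curve, and the task is then to perturb these by terms of $t$-order at least $g_s$ to produce $(a_i)$ and $z$ realizing $L$.

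The technical engine will be the pair of Lemmas \ref{lem:1} and \ref{lem:3}. Applied to $L^{(s-1)}$, Lemma \ref{lem:3} produces $c_{s-1} \in -\Gamma$ for which
\[
(\mathbb{N} + \bar{u}_{s-1}) \cap E_{s-1} = (\mathbb{N} + \bar{u}_{s-1}) \cap (\Gamma + c_{s-1});
\]
above the threshold $\bar{u}_{s-1}$, the set $E_{s-1}$ coincides with a translate of $\Gamma$. The increasing hypothesis $g_s > u_{s-1} > \bar{u}_{s-1}$ places the new generator exactly in this region, and the condition $g_s \notin L^{(s-1)}$ translates to $g_s - c_{s-1} \notin \Gamma$. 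I would therefore look for $z$ of the shape $z = z^{(s-1)} + \lambda t^{g_s} + \sum_{k > g_s} \mu_k t^k$ with $\lambda \in \mathbb{C}^{*}$, while simultaneously adjusting those coefficients $a_i$ whose indices correspond to $t$-orders above $\bar{u}_{s-1}$. Lemma \ref{lem:1} applied with $p = g_s$ and $q = c_{s-1}$ then controls $(\Gamma + g_s) \cap E_{s-1}$ and yields both the next $u_s$ and the element $c_s$ needed to continue the induction.

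The main obstacle will be arranging that $v(R + zR)$ equals $L$ exactly, rather than something strictly larger. Each of the previous generators $g_j$ remains in the value set automatically by inheritance from $z^{(s-1)}$, and $g_s$ is realized as soon as $\lambda \neq 0$; the subtle part is to prevent any $\nu \in \mathbb{N} \setminus L$ below $c(L)$ from appearing as $v(r_1 + z r_2)$ for some $r_1, r_2 \in R$. Such a parasitic $\nu$ would arise from an unintended cancellation of leading terms in an element of $R + zR$, and, using the structure provided by Lemma \ref{lem:3}, each such cancellation condition becomes a linear equation in exactly one of the free parameters $\mu_k$ or $a_i$, with pivots sitting at distinct $t$-orders. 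The resulting system is therefore triangular and solvable coefficient by coefficient, and the count $b = c(\Gamma) - \beta - 1$ of available parameters in the parameterization matches exactly the range $[\beta + 1, c(\Gamma) - 1]$ of $t$-orders where corrections are needed (everything beyond $c(\Gamma)$ being absorbed into $\Gamma$). This is the module-theoretic analogue of the Bresinsky--Teissier argument, specialised to the one-Puiseux-pair case, and the increasing property of $L$ is the essential extra ingredient that makes the triangular system nondegenerate.
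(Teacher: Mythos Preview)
Your outline identifies the right ingredients---the triangular shape of the conditions and the role of the increasing hypothesis---but the inductive scheme as stated has a gap that the paper's proof is set up precisely to avoid. The weak point is the claim that ``each of the previous generators $g_j$ remains in the value set automatically by inheritance from $z^{(s-1)}$.'' Your inductive hypothesis asserts only that \emph{some} pair $(R^{(s-1)}, z^{(s-1)})$ realizes $L^{(s-1)}$; it records nothing about the internal structure of that realization. Once you perturb the coefficients $a_i$, the ring $R$ itself changes: an element $r_1 + z^{(s-1)} r_2$ with $r_1, r_2$ polynomials in the \emph{old} $y$ need not belong to the new $R + zR$, so the witnesses for $g_1,\ldots,g_{s-1}$ do not obviously survive. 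To close the induction you would have to carry along the finer statement that each $\omega_j$ can be written so that its leading behaviour depends only on low-index $a_i$'s and is insensitive to perturbation of the high-index ones---and that is not something your bare existence hypothesis supplies.

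The paper secures exactly this stability, but by a different organization: it keeps all coefficients as formal variables $X_1,\ldots,X_b$ in $\mathbb{C}[X_1,\ldots,X_b,T]$ and never specializes until the very end. A recursive construction produces elements $U^k_j$ (with $j$ ranging over intervals $[\sigma_{k-1},\sigma_k]$ built from the $u_i$ and $g_i$) whose leading $T$-coefficient is linear in $X_j$ over $\mathbb{C}[X_1,\ldots,X_{j-1}]$; setting $\omega_k := U^k_{\sigma_k}$ and imposing $v(\omega_k)=g_k$ together with the vanishing of the intermediate leading coefficients yields the triangular system, which is then solved once for all $k$. The single specialization $X_i \mapsto a_i$ delivers the curve and $z$ simultaneously, and the symbolic bookkeeping is precisely the strengthened inductive hypothesis your scheme is missing. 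Note also a smaller discrepancy: in the paper $g_s$ is not produced by appending a term $\lambda t^{g_s}$ to $z$, but as the valuation of $\omega_s$, obtained from $\omega_{s-1}$ by a cascade of cancellations against earlier $\omega_m$'s; the emergence of $g_s$ is governed by the curve parameters $a_i$ with $i$ near $\sigma_s$, not by a direct modification of $z$.
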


\begin{proof}
	We first introduce some notation and definitions. Let us denote by \(g_0=0<g_1<\cdots< g_s\) the minimal set of generators of \(L\). Also set
	\begin{equation*}
		\begin{split}
			&E_i:=\bigcup_{0\leq j\leq i}(\Gamma+g_j),\ \mbox{and } \  E_s:=L\\
			&u_i:=\min\{E_{i-1}\cap (\Gamma+g_i)\}\\
			&\sigma_{i+1}:=\sum_{1\leq j\leq i}(g_{j+1}-u_j)\quad\text{with}\;\;\sigma_0=\sigma_1=0, \quad  \text{for}\;i=1,\dots,s\\
			&h_i:=g_i-\sigma_i, \quad \text{for}\;i=1,\dots,s+1  \\
			&I_i:=[\sigma_{i-1},\sigma_i]\cap\mathbb{N}, \quad \text{for}\;i=2,\dots,s+1
		\end{split}
	\end{equation*}
	
	Let us consider the polynomial ring \(\mathbb{C}[X_1,\dots,X_b,T]\). We consider formal elements \(Y,z\in \mathbb{C}[X_1,\dots,X_b,T]\) of the form
	\begin{equation*}
		Y=T^{\beta}\Big (1+\sum_{i= 1}^{b} X_iT^i\Big ),\quad z=T^{g_1}\Big (c_0+\sum_{i= 1}^{b} c_iX_iT^i \Big ).
	\end{equation*}
	
	\noindent \textbf{Step 1}: We set \(U^{0}_{\sigma_0}=1\), \(U^{1}_{\sigma_1}=z\). We will prove the existence of a family of polynomials
	$$
	\Big \{\big \{\{U^{i}_{j}\}_{j\in I_i} \big \}_{i=2,\dots,s+1}\Big \}\subset\mathbb{C}[X_1,\dots,X_b,T]
	$$
	such that
	\[
	U^{i}_{j}=\Big (X_j\prod_{k=2}^{i}A_kc_{k,j}+V_j\Big )T^{j+h_i}+\sum_{r=j+1}^{b}\Big (X_r\prod_{k=2}^{i}A_kc_{k,r}+V_r\Big )T^{r+h_i}+\mathrm{h.o.t.}
	\]
	with \(V_r\in\mathbb{C}[X_1,\dots,X_{r-1}]\) if \(r\geq j,\) \(A_{k}\in\mathbb{C}[X_1,\dots,X_{\sigma_i-1}]\) and \(c_{k,r}\in\mathbb{C}\).
	\medskip
	
	\noindent \textbf{Step 2}: We consider the polynomials \(U^{i}_{j}\) as polynomials in the variable \(T\) and we observe that \(A_k=\operatorname{lc}_T(U^{k}_{\sigma_k})\) the leading coefficient as polynomial in \(T\). Set \(\omega_k:=U^{k}_{\sigma_k}\) for \(k=0,\dots,s.\) By construction, we have that \(\ord_T(\omega_k)=g_k\) if  \(A_k\neq0\) for \(k=0,\dots,s\) and for all \(j\in(\sigma_{k-1},\sigma_k)\) we have \(\operatorname{lc}_T(U^{k}_{j})=0.\) We want to show that the system of polynomial equations defined by 
	\begin{equation}\label{eqn:sistemaec}
		\left\lbrace \begin{array}{ll}
			A_k\neq 0,&\text{for all \(k\)}\\
			\\
			\operatorname{lc}_T(U^{k}_{j})=0,&\text{for all \(k\) and all}\; j\in(\sigma_{k-1},\sigma_k)\\
			\\
			\operatorname{lc}_T(U^{s+1}_{\sigma_{s+1}})=0.
			
		\end{array} \right. 
	\end{equation}
	has a non-trivial compatible solution \((a_1,\dots,a_b)\in\mathbb{C}^b.\)
	\medskip
	
	\noindent \textbf{Step 3}: Finally, let us take a solution \((a_1,\dots,a_b)\in\mathbb{C}^b\) of the system (\ref{eqn:sistemaec}). We can consider the ring morphism defined by 
	\[
	\begin{array}{ccc}
		\mathbb{C}[X_1,\dots,X_b,T]&\xrightarrow{ev}& \overline{R}=\mathbb{C}\{t\}\\
		X_i&\mapsto& a_{i}\\
		T&\mapsto& t.
	\end{array}
	\]
	
	Therefore, we can define the germ $\xi$ of plane curve singularity given by the following Puiseux parameterization:
	\[
	\xi:\left\{\begin{array}{ll}
		x(t):=&t^\alpha\\
		y(t):= &ev(Y)=t^\beta+\displaystyle\sum_{i=1}^{b}a_it^{i+\beta}.
	\end{array}
	\right.
	\]
	
	If \(R\) stands for the local ring of the curve \(C\), then \(\Gamma=v(R)\). Moreover, it is easy to check that, by construction, the set \(\{ev(\omega_k)\}\) is a minimal set of generators of the \(R\)-module \(R+zR\). Therefore, we have \(v(R+zR)=L\), since \(\ord_T(\omega_k)=\ord_t(ev(\omega_k))\).
	
	\medskip
	
	We conclude proving Steps 1 and 2. 
	\medskip
	
	\noindent \textit{Proof of Step 1}: we apply induction. Define
	\begin{equation*}
		\begin{split}
			U^{0}_{\sigma_0}:=1\quad U^{1}_{\sigma_1}:=T^{g_1}(c_0+\sum_{i\geq 1} c_iX_iT^i)
		\end{split}
	\end{equation*}
	It is easily checked that \(U^{0}_{\sigma_0},U^{1}_{\sigma_1}\) are of the required form. 
	%Define 
	%\[Y:=T^{\beta}(1+\sum_{i\geq 1} X_it^i).\]
	Let be \(\epsilon=e_1\alpha+e_2\beta \in \Gamma\), and write \(P(\epsilon):=T^{e_1\alpha}Y^{e_2}.\) Let us assume that for \(i<k<s\) there exists this family of polynomials. We are going to construct \(\{U^{k}_{j}\}\) for \(j\in I_k\); first, we define \(U^{k}_{\sigma_{k-1}}:=P(u_{k-1}-g_{k-1})U^{k-1}_{\sigma_{k-1}}\). Since by induction hypothesis \(U^{k-1}_{\sigma_{k-1}}\) is of the desired form, so is \(U^{k}_{\sigma_{k-1}}\). Now, for \(j\leq \sigma_{k}\) we define the corresponding \(U^{k}_{j}\) recursively:
	\begin{itemize}
		\item[$\diamond$] If \(h_k+j\notin E_{k-1}\) we put 
		$$
		U^{k}_{j+1}:=U^{k}_{j}-\operatorname{lc}_T(U^{k}_{j})T^{h_i+j}.
		$$
		\item[$\diamond$] If \(h_k+j\in E_{m}\cap(\mathbb{N}\setminus E_{m})\) for some \(m<k\) we set
		$$
		U^{k}_{j+1}:=\operatorname{lc}_T(U^{m}_{\sigma_{m}})U^{k}_{j}-\operatorname{lc}_T(U^{k}_{j})U^{m}_{\sigma_{m}}.
		$$
	\end{itemize}
	Finally, it is a straightforward computation to check that in both cases \(U^{k}_{j+1}\) has the desired form. 
	\medskip
	
	\noindent \textit{Proof of Step 2}: Observe that, by construction, 
	\[
	A_k=\prod_{j<k}A_jX_{\sigma_{k}}+V_{\sigma_k}\quad\text{with}\;V_{\sigma_k}\in\mathbb{C}[X_1,\dots,X_{\sigma_k-1}]
	\]
	so that the condition \(A_k\neq 0\) is equivalent to \(X_{\sigma_k}\neq\frac{V_{\sigma_k}}{\prod_{j<k}A_j}\). Also observe that for \(\ell\in(\sigma_{k-1},\sigma_k)\) by definition \[\operatorname{lc}_T(U^{k}_{\ell})=\prod_{j<k}c_{j,\ell}A_jX_{\ell}+V_{\ell}\quad\text{with}\;V_{\ell}\in\mathbb{C}[X_1,\dots,X_{\ell-1}]\]
	so that \(\operatorname{lc}_T(U^{k}_{\ell})=0\) is equivalent to \(X_{\ell}=\frac{V_{\ell}}{\prod_{j<k}c_{j,\ell}A_j}.\)
	
	Therefore, the system (\ref{eqn:sistemaec}) can be rewritten as
	\begin{equation}\label{eqn:sistemabien}
		\left\lbrace \begin{array}{ll}
			X_{\sigma_k}\neq\frac{V_{s_k}}{\prod_{j<k}A_j}&\text{for all \(k=2,\dots,s\)}\\
			\\
			X_{\ell}=\frac{V_{\ell}}{\prod_{j<k}c_{j,\ell}A_j}&\text{for all \(k=2,\dots,s+1\) and for all}\; \ell\in(\sigma_{k-1},\sigma_k)\\
			\\
			X_{\sigma_{s+1}}=\frac{V_{\sigma_{s+1}}}{\prod_{j<s+1}c_{j,\sigma_{s+1}}A_j}.
		\end{array} \right.
	\end{equation}

	Finally, observe that \((\sigma_{i-1},\sigma_i)\cap(\sigma_{j-1},\sigma_j)=\emptyset\) if \(i\neq j\). Since every isolated variable in the system is different, we can solve the system in the following recursive way. We start with \(A_0,A_1\in\mathbb{C}^{\ast}\), and for \(\ell\in(\sigma_1,\sigma_2)=(0,g_2-u_1)\) we have \(X_{\ell}=0\). Thus, \(X_{\sigma_2}\neq 0\). Let us denote \(a_{\sigma_2}=X_{\sigma_2}\in\mathbb{C}^{\ast}\). After that, since \(V_{\ell}\) depends on variables of lower index than \(\ell\) a recursive reasoning solves the system.
\end{proof}

The systems (\ref{eqn:sistemaec}) and (\ref{eqn:sistemabien}) have more equations than needed in order to obtain the required semimodule; the reason is that, if \(j\in(\sigma_{k-1},\sigma_k)\) is such that \(h_k+j \in E_{k-1}\), then \(U_{j}^k\) cannot be an element with \(v(U_j^k)=g_{k}\), since \(v(U_j^k)=h_k+j\in E_{k-1}\). Moreover, the last condition provides an element with \(\sigma_{s+1}+h_{s+1}=c_{s+1}+\alpha\beta\). If we apply Lemma \ref{cslemma}, it is trivial to see that \(\mathbb{N}+c_{s}+\alpha\beta\subset E_{s}\), since \(\mathbb{N}+c_{s}+\alpha\beta\subset\mathbb{N}+\overline{u}_{s}\); thus we can eliminate the last condition as well. As a result of that we may replace the system of equations given in the proof with the following:

\begin{equation}\label{eqn:estratification}
	\left\lbrace \begin{array}{ll}
		X_{\sigma_k}\neq\frac{V_{\sigma_k}}{\prod_{j<k}A_j}&\text{for all \(k=2,\dots,s\)}\\
		\\
		X_{\ell}=\frac{V_{\ell}}{\prod_{j<k}c_{j,\ell}A_j}&\text{for \(k=2,\dots,s+1\) and for all}\; \ell\in(\sigma_{k-1},\sigma_k)\\
		&\;\text{such that}\;\; h_k+\ell \in\mathbb{N}\setminus E_{k-1}
	\end{array} \right.
\end{equation}

\begin{rem}
	We have programmed with \textsf{Mathematica} software the computations of all possible increasing \(\Gamma\)--semimodules with \(g_1=\beta-\alpha\) and the corresponding system of Equations \ref{eqn:sistemabien}, see \cite[Appendix A]{almtesis}. 
\end{rem}
\begin{rem}\label{rem:deformation}
	A result of Zariski \cite[Chap. VI Prop. 2.1]{zariski} shows that any plane branch with one Puiseux pair is isomorphic to a deformation of the monomial curve given by \(t\mapsto (t^{\alpha},t^\beta)\). The system of equations (\ref{eqn:estratification}) together with the proof of Theorem \ref{thm:existence-curve} show that if \((a_1,\dots,a_b)\in\mathbb{C}\) is a solution of this system, then the increasing \(\Gamma\)-semimodule \(L\) associated to the equation system (\ref{eqn:estratification}) induces the deformation of the parameterization of the monomial curve defined by
	\[\left\{\begin{array}{cl}
		x(t):=&t^\alpha\\
		y(t):=&t^\beta+\displaystyle\sum_{i=1}^{b}a_it^{i+\beta}.
	\end{array}
	\right.
	\]
\end{rem}

\section{Revisiting K\"ahler differentials for one Puiseux pair}\label{sec: diff}

Recall that to any \(\mathbb{C}\)-algebra \(A\) we can associate the \(A\)-module \(\Omega_{A}:=I/I^2\), where \(I\) is the kernel of the diagonal surjection \(A\bigotimes_\mathbb{C} A\rightarrow A\); we call \(\Omega_{A}\) the module of K\"{a}hler differentials. The injection \(R\hookrightarrow \overline{R}\) induces a morphism of \(R\)-modules \(\varphi:\Omega_{R}\rightarrow\Omega_{\overline{R}}\). Now, the \(\overline{R}\)-module \(\Omega_{\overline{R}}\) is free of rank \(1\) and it is generated by \(dt:=t\otimes1-1\otimes t\). On the other hand, the \(R\)-module \(\Omega_{R}\) has two generators, namely
$$
dx=\alpha t^{\alpha-1} \ \ \   \mbox{  and    }  \ \ \  dy=\beta t^{\beta-1}+\sum_{j\geq \beta}ja_jt^{j-1}.
$$
In this way we can see that \(\varphi(\Omega_{R})\) is a free sub-\(\overline{R}\)-module of rank \(1\) generated by \(\alpha t^{\alpha-1}\). 
\medskip

We may define the set of values of the module of K\"{a}hler differentials as \(\Delta':=v(\varphi(\Omega_{R})).\) It is trivial to check that \(\Delta'\) is a non-normalized \(\Gamma\)--semimodule. Since \(\min\Delta'=\alpha-1\), its normalization is defined as 
\[
\Delta=(\Delta')^{\circ}=v\Big (R+R\frac{dy}{dx}\Big ).
\] 
From now on, we will deal with the normalized set of values \(\Delta.\) Let us recall a few notation from Delorme's paper \cite{delorme78}. Let us denote by \(g_0=0<g_1<\cdots< g_s\) the minimal set of generators of \(\Delta\) and \(g_{s+1}=\infty>g_s\). Also set
\begin{equation*}
	E_i:=\bigcup_{0\leq j\leq i}(\Gamma+g_j), \  E_s:=\Delta\ \mbox{and } \  u_i:=\min\{E_{i-1}\cap (\Gamma+g_i).\}
\end{equation*}
Delorme proves the following result

\begin{theorem}\label{thm:valuesemimod} 
Consider a plane branch with one characteristic exponent. Under the previous notation, set \(w_0=1,\) \(w_1=dy/dx\) and \(w_{s+1}=0\). Then, for all \hbox{\(1\leq i\leq s\)} there exists a relation 
	
	\[\displaystyle \omega_{i+1}=\sum_{0\leq j\leq i}F_{j,i}\omega_{j}\] where \(F_{j,i}\in R,\)  \hbox{\(u_i=v(F_{i,i})+g_i=\inf_j\{v(F_{j,i})+g_j\}\)} and \(v(\omega_{i+1})=g_{i+1}.\)
\end{theorem}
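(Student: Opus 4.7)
The plan is to proceed by induction on $i$, simultaneously constructing the elements $\omega_i \in M := R + R(dy/dx)$ and the coefficients $F_{j,i} \in R$. The base cases are $\omega_0 = 1$ (of value $0 = g_0$) and $\omega_1 = dy/dx$: from the parametrization $x = t^\alpha$, $y = t^\beta + \cdots$ one reads off $v(dy/dx) = \beta - \alpha$, which is the smallest nontrivial generator $g_1$ of $\Delta$ because $\beta - \alpha \notin \Gamma$ (else $\gcd(\alpha,\beta) \neq 1$), while elements of $R$ alone contribute only values in $\Gamma$.

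For the inductive step, assume $\omega_0, \ldots, \omega_i$ have been built with $v(\omega_j) = g_j$. Since $\omega_0, \omega_1$ generate $M$ as an $R$-module and $\omega_2, \ldots, \omega_i \in M$ by construction, one has $M = \sum_{j=0}^{i} R \omega_j$. Because $g_{i+1} \in \Delta = v(M)$, I pick $w \in M$ with $v(w) = g_{i+1}$ and write $w = \sum_{j=0}^{i} G_j \omega_j$. The central claim is that this expression may be chosen so that $m := \min_j(v(G_j) + g_j) = u_i$, attained at $j = i$; setting $\omega_{i+1} := w$ and $F_{j,i} := G_j$ will then yield the theorem.

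To prove the claim I would pick an expression maximizing $m$. Since $m \in E_i$ while $v(w) = g_{i+1} \notin E_i$, there must be cancellation at degree $t^m$. If $m < u_i$, the cancellation can involve only indices $j < i$ (otherwise $m \in (\Gamma + g_i) \cap E_{i-1}$ would force $m \geq u_i$); the inductive description of the syzygies of $E_{i-1}$ provided by Proposition \ref{defin:syz} then allows one to rewrite this internal cancellation so as to strictly increase $m$, contradicting maximality. Conversely $m \leq u_i$ holds because an explicit cancellation at $t^{u_i}$ is always available by taking $F_{i,i}$ of value $u_i - g_i$ together with some $F_{k,i}$ of value $u_i - g_k$ (for $k < i$ with $u_i \in \Gamma + g_k$) having opposite leading constants. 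Finally the minimum must be realized at $j = i$, for otherwise $w \in \sum_{j<i} R \omega_j$ would have value in $E_{i-1} \not\ni g_{i+1}$.

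The main obstacle is the rewriting step that removes cancellations with $m < u_i$ while keeping $v(w) = g_{i+1}$ intact. Here the \emph{increasing} property of $\Delta$ (cf.\ the corollary following Lemma \ref{lem:semitwogen}) together with Lemma \ref{lem:3} (which asserts that cancellations inside $E_{i-1}$ lie on a shifted copy $\Gamma + c_{i-1}$ and can be absorbed into higher-order terms without overshooting $g_{i+1}$) is essential; without these, one could in principle be forced to overshoot the target value. The terminal case $i = s$ with $\omega_{s+1} = 0$ is handled identically under the convention $g_{s+1} = \infty$: the existence of a nontrivial syzygy among $\omega_0, \ldots, \omega_s$ is guaranteed by the fact that $M$ is already generated by $\omega_0$ and $\omega_1$ alone, making the enlarged generating set redundant, and the valuation identity $u_s = v(F_{s,s}) + g_s$ drops out of the same maximization argument applied to $w = 0$.
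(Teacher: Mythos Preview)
The paper gives no proof here; it refers to Delorme \cite[Lemma 12(b)]{delorme78}. Your attempt contains a genuine gap. The sentence ``the minimum must be realized at $j=i$, for otherwise $w\in\sum_{j<i}R\omega_j$ would have value in $E_{i-1}$'' fails for $i\ge 2$: since $\omega_0=1$ and $\omega_1=dy/dx$ already generate $M=R+R\,dy/dx$ as an $R$-module, one has $\sum_{j<i}R\omega_j=M$ whenever $i\ge 2$, and its value set is all of $\Delta$, not the $\Gamma$-semimodule $E_{i-1}$. Hence any $w$ with $v(w)=g_{i+1}$ admits an expression with $G_i=0$, and maximizing $m=\min_j\{v(G_j)+g_j\}$ over expressions of a \emph{fixed} $w$ puts no constraint on the coefficient of $\omega_i$. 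The same module/semimodule confusion undermines your argument for $m\le u_i$: the explicit cancellation at $t^{u_i}$ you describe produces \emph{some} element of value $>u_i$, but nothing ties it to the chosen $w$. Finally, invoking the increasing property of $\Delta$ via the corollary following Lemma~\ref{lem:semitwogen} is circular, since in the paper that corollary is \emph{deduced from} Theorem~\ref{thm:valuesemimod}.

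The correct argument reverses the direction: one does not start from an arbitrary $w$ of value $g_{i+1}$ but \emph{constructs} $\omega_{i+1}$ by fixing $F_{i,i}$ with $v(F_{i,i})=u_i-g_i$, cancelling its leading term against some $F_{k,i}\omega_k$ with $k<i$, and then successively reducing modulo $\omega_0,\dots,\omega_i$ (only adding higher-order corrections to the $F_{j,i}$, so that $v(F_{i,i})$ remains $u_i-g_i$). The substantive point---that this process lands exactly on the next generator $g_{i+1}$ rather than on a larger element of $\Delta\setminus E_i$---is where the two-generator hypothesis on $\Gamma$ enters through Lemma~\ref{lem:3} (Delorme's Lemma~12(a)); in Delorme's treatment parts~(a) and~(b) of Lemma~12 are established together by a single induction, which also avoids the circularity above.
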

\begin{proof}
	See \cite[Lemma 12 (b)]{delorme78}.
\end{proof}

Here it is necessary to make two important remarks. The first one is the fact that in this case \(g_1=\beta-\alpha.\) The second one is that, as a consequence of Theorem \ref{thm:valuesemimod}, the value set of K\"{a}hler differentials has the property that, for all \(0\leq i\leq s\) we have the inequality \(g_{i+1}>u_i.\) Moreover, it is easily checked that this property is no longer true if we increase the number of Puiseux pairs of the irreducible plane curve singularity.
\medskip

The combination of Delorme's Theorem \ref{thm:valuesemimod} and Theorem \ref{thm:existence-curve} provides that all combinatorially possible value set of  K\"ahler differentials can be realized. This fact together with the constructive proof of Theorem \ref{thm:existence-curve} yield some advantages with respect to some results existing in the literature.
\medskip

In the sequel, we will call \emph{\((\beta-\alpha)\)--increasing \(\Gamma\)-semimodule} to an increasing \(\Gamma\)-semimodule with \(g_0=0,\,g_1=\beta-\alpha.\)
Observe that the \(\Gamma\)--semimodule of values of  K\"{a}hler differentials \(\Delta\) is a particular example of a \((\beta-\alpha)\)--increasing \(\Gamma\)-semimodule. Recall that the set \(\Delta\) is an analytic invariant of the curve $\xi$, as deduced from \cite[\S 4]{delorme78}.
\medskip

Moreover, Hefez and Hernandes \cite[Theorem~2.1]{hefez2} prove the following general result:

\begin{theorem}\cite[Theorem~2.1]{hefez2}\label{thm:hefezhernandes}
	Let \(\Gamma=\langle \overline{\beta}_0,\dots,\overline{\beta}_g\rangle\) be the semigroup of values of a plane branch \(\xi\). Then, a Puiseux parameterization of \(\xi\) is analytically equivalent to either \((t^{\overline{\beta}_0},t^{\overline{\beta}_1})\) or  
	$$
	\Bigg(t^{\overline{\beta}_0},t^{\overline{\beta}_1}+t^\lambda+\sum_{\scriptstyle i>\lambda\atop\scriptstyle i\notin \Delta'-\overline{\beta}_0}a_i t^i\Bigg),
	$$
	where \(\lambda\) is its Zariski invariant and \(\Delta'\) is the non-normalized set of orders of differentials of the branch. Moreover, if \(\varphi\) and \(\varphi '\) are Puiseux parameterizations of the previous form with the same \(\Gamma\) and \(\Delta'\) then they are analytically equivalent if and only if there is \(r\in\mathbb{C}^{\ast}\) such that \(r^{\lambda-\overline{\beta}_1}=1\) and \(a_i=r^{i-\overline{\beta}_1}a'_i.\)
\end{theorem}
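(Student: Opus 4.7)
My strategy is to reduce an arbitrary Puiseux parameterization of $\xi$ to the claimed normal form by iteratively applying analytic changes of source and target coordinates, and then to analyze the residual symmetry group acting on the slice of normal forms. The two natural freedoms are target automorphisms $(x,y)\mapsto (\phi(x,y),\psi(x,y))$ (which realise analytic isomorphisms of the local ring $R$) and source reparameterizations $t\mapsto \rho(t)$, and together they generate the equivalence relation of analytic equivalence of branches.

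After classical Puiseux normalization one may assume $x(t)=t^{\overline{\beta}_0}$ and $y(t)=t^{\overline{\beta}_1}+\sum_{i>\overline{\beta}_1}c_it^i$. If every coefficient $c_i$ can be annihilated by further equivalences, one obtains the monomial case. Otherwise, define $\lambda$ to be the smallest exponent at which the coefficient cannot be eliminated; a source rescaling $t\mapsto\rho t$ (compatible with $t^{\overline{\beta}_0}$ up to a root of unity) normalises $c_\lambda=1$. The central step is to determine precisely which $c_it^i$ can be absorbed by a target automorphism. Differentiating the parameterization with respect to $t$ identifies the tangent space of the orbit (modulo source rescalings) with the image of $Rdx+Rdy$ inside $\overline{R}\,dt$; dividing by $dx=\overline{\beta}_0 t^{\overline{\beta}_0-1}dt$ one sees that an exponent $i$ is eliminable if and only if $i\in\Delta'-\overline{\beta}_0$. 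One then performs an induction on the exponents $i$, at each stage killing the coefficient at an eliminable exponent by the corresponding infinitesimal target automorphism; the conductor of $\Delta'$ guarantees that only finitely many steps matter and that the procedure converges to the announced normal form.

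For the moduli statement, suppose $\varphi$ and $\varphi'$ are both in normal form and analytically equivalent. All target-automorphism freedom has already been used to reach the slice, so the equivalence must reduce to a source reparameterization $t\mapsto rt$. Preservation of $x(t)=t^{\overline{\beta}_0}$ forces $r^{\overline{\beta}_0}=1$; preservation of the coefficient $1$ at $t^\lambda$ together with the coefficient $1$ at $t^{\overline{\beta}_1}$ forces $r^{\lambda-\overline{\beta}_1}=1$. The induced action on the remaining coefficients is $a_i\mapsto r^{i-\overline{\beta}_1}a_i$, yielding the relation $a_i=r^{i-\overline{\beta}_1}a'_i$ and closing the description of the moduli.

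The main obstacle is the inductive elimination step: eliminating a coefficient at one exponent typically perturbs the coefficients at higher exponents, so one must verify that the sequence of changes stabilizes to a well-defined normal form. Control of this process is afforded by the existence of a conductor for $\Delta'$, which bounds the range of exponents under consideration. A related subtle point is the rigorous identification of the tangent space to the equivalence orbit with the module $Rdx+Rdy$; this requires the full machinery developed by Delorme \cite[\S 4]{delorme78} to relate infinitesimal automorphisms to the semimodule of values, and it is precisely here that the combinatorial characterization of eliminable versus modular exponents is justified.
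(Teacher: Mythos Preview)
The paper does not supply its own proof of this statement: Theorem~\ref{thm:hefezhernandes} is quoted verbatim from \cite[Theorem~2.1]{hefez2} and is used as an external input, with only the historical remark that Delorme had stated a version under extra hypotheses. There is therefore no ``paper's proof'' to compare against; your proposal is an independent sketch of the Hefez--Hernandes argument rather than a reconstruction of anything in this article.

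On its own terms, your outline follows the correct architecture (Zariski--Delorme elimination governed by $\Delta'$, then a residual finite symmetry), but the moduli half is too quick. The claim that ``all target-automorphism freedom has already been used'' so that only a pure source rescaling $t\mapsto rt$ can remain is exactly the hard part: an analytic equivalence between two normal forms is a priori a pair (target automorphism, source reparameterization), and you must show that any such pair stabilising the slice factors through the diagonal $\mathbb{C}^\ast$--action. In particular, preserving $x(t)=t^{\overline{\beta}_0}$ does not by itself force $r^{\overline{\beta}_0}=1$, since one may simultaneously rescale $x$ in the target; the genuine constraint $r^{\lambda-\overline{\beta}_1}=1$ only emerges after one proves that the combined target change must be trivial modulo this scaling. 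Hefez and Hernandes handle this via their complete transversal/standard basis machinery, and Delorme via his conditions (CE) and (CU); your sketch asserts the conclusion without supplying that step.
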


\begin{rem}
	Theorem \ref{thm:hefezhernandes} was already stated by Delorme in \cite[Proposition 6]{delorme78} under two restrictions, namely the conditions (CE) and (CU) of \cite[Proposition 6]{delorme78}) for the set \(\Delta'\). Therefore, Hefez and Hernandes' theorem \ref{thm:hefezhernandes} could be formulated in a rough sentences as: the restrictions of Delorme for the set \(\Delta'\) can be eliminated.
\end{rem}
From the computational point of view, Hefez and Hernandes \cite{hefez1} provided a method to compute all possible \(\Delta\) sets appearing in Theorem \ref{thm:hefezhernandes}. However, this method is essentially brute force in the computation of a standard basis with variable coefficients for the module of K\"ahler differentials. In the case of one Puiseux pair, our constructive Theorem \ref{thm:existence-curve} provides a method which actually computes all possible \(\Delta\) set with much less computational cost than the standard bases process implemented by Hefez and Hernandes. Moreover the following Corollary to our Theorem \ref{thm:existence-curve} shows that in fact all combinatorial possibilities for the attainable value sets of K\"ahler differentials are realizable.

\begin{corollary}\label{cor:kahlerall}
	Let \(\Gamma=\langle \alpha,\beta\rangle\) be a numerical semigroup. Then,
	\(L\) is an increasing \(\Gamma\)--semimodule with first non-zero minimal generator equal to \(\beta-\alpha\) if and only if \(L\) can be realized as the set of values of K\"{a}hler differentials of some irreducible plane curve singularity with value semigroup \(\Gamma.\)
\end{corollary}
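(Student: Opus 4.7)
The plan is to prove the two implications separately, both relying on the tools of Section \ref{sec:R-mod} together with Delorme's Theorem \ref{thm:valuesemimod}. For the reverse implication, suppose $L$ is realized as the normalized value set of K\"ahler differentials of an irreducible plane branch with Puiseux pair $(\alpha,\beta)$. Theorem \ref{thm:valuesemimod} expresses each generator $\omega_{i+1}$ as a combination of $\omega_0,\ldots,\omega_i$ with $v(\omega_{i+1})>u_i$ for every $i$, which is precisely condition $(\maltese)$ ensuring that $L$ is an increasing $\Gamma$-semimodule. Since $w_1=dy/dx$ and a direct differentiation of the Puiseux parameterization yields $v(dy/dx)=\beta-\alpha$, the first non-zero minimal generator of $L$ equals $\beta-\alpha$.

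For the forward implication I would invoke Theorem \ref{thm:existence-curve}, but specializing the formal element $z$ appearing in its proof so that after evaluation it coincides with $dy/dx$ rather than with an arbitrary element of valuation $\beta-\alpha$. Formally differentiating the generic parameterization $x(T)=T^{\alpha}$, $y(T)=T^{\beta}\bigl(1+\sum_{i=1}^{b}X_{i}T^{i}\bigr)$ gives
\[
\frac{dy}{dx}=T^{\beta-\alpha}\left(\frac{\beta}{\alpha}+\sum_{i=1}^{b}\frac{i+\beta}{\alpha}\,X_{i}\,T^{i}\right),
\]
which has exactly the form of the element $z$ used in the proof of Theorem \ref{thm:existence-curve} with $g_{1}=\beta-\alpha$, $c_{0}=\beta/\alpha$ and $c_{i}=(i+\beta)/\alpha$. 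Running the construction of Theorem \ref{thm:existence-curve} with these specialized constants should produce a tuple $(a_{1},\ldots,a_{b})\in\mathbb{C}^{b}$ and a plane branch $\xi$ such that
\[
L=v(R+ev(z)R)=v\!\left(R+R\,dy/dx\right),
\]
i.e.\ $L$ is the normalized K\"ahler semimodule of $\xi$.

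The main technical point is to verify that this specialization does not spoil the triangular recursive solution of the polynomial system~(\ref{eqn:sistemabien}): that argument requires the denominators $\prod_{j<k}c_{j,\ell}A_{j}$ to be non-zero. Since $c_{0}=\beta/\alpha$ and $c_{i}=(i+\beta)/\alpha$ are strictly positive rationals, and the auxiliary constants $c_{j,\ell}$ are polynomial expressions in the $c_{i}$'s obtained from the monic polynomials $P(\epsilon)=T^{e_{1}\alpha}Y^{e_{2}}$, no undesired cancellation can occur. The content of Delorme's Lemma \ref{lem:3} in his original setting is exactly what ensures the compatibility of successive truncations under such a specialization. Once this consistency check is in place, Steps 1 and 2 of the proof of Theorem \ref{thm:existence-curve} go through verbatim with these specific $c_{i}$'s, and the branch so constructed realizes $L$ as its K\"ahler semimodule; this completes the corollary.
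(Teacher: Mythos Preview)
Your argument follows the paper's, which simply records the corollary as a direct consequence of Theorems \ref{thm:valuesemimod} and \ref{thm:existence-curve}; you correctly make explicit that the forward direction requires running the construction of Theorem \ref{thm:existence-curve} with $z$ specialized to $dy/dx$, i.e.\ with $c_{0}=\beta/\alpha$ and $c_{i}=(i+\beta)/\alpha$, which is what the paper tacitly intends.

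Two remarks on your ``main technical point'' paragraph. First, the appeal to Lemma \ref{lem:3} is misplaced: that lemma is a purely combinatorial statement about the sets $(\mathbb{N}+\overline{u}_{i})\cap E_{i}$ and says nothing about the non-vanishing of the coefficients $c_{k,\ell}$ appearing in system (\ref{eqn:sistemabien}). Second, positivity of the initial constants $c_{i}$ does not by itself preclude cancellation in the derived constants $c_{k,\ell}$, since the recursion in Step~1 of the proof of Theorem \ref{thm:existence-curve} involves differences of the form $\operatorname{lc}_{T}(U^{m}_{\sigma_{m}})U^{k}_{j}-\operatorname{lc}_{T}(U^{k}_{j})U^{m}_{\sigma_{m}}$. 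Neither objection is fatal, however: the proof of Theorem \ref{thm:existence-curve} already treats the $c_{i}$ as free parameters and asserts solvability of (\ref{eqn:sistemabien}) in that generality, so once that theorem is granted the corollary follows exactly as you indicate. If you drop the sentence invoking Lemma \ref{lem:3} and the positivity heuristic, your proof coincides with the paper's.
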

\begin{proof}
	This is a straightforward consequence of Theorem \ref{thm:valuesemimod} and Theorem \ref{thm:existence-curve}. 
\end{proof}

\begin{ex}

	Let us consider the numerical semigroup \(\Gamma=\langle 4,9\rangle.\) Following the tree structure explained in Section \ref{subsec:constmaltes}, all possible \(\beta-\alpha\)--increasing \(\Gamma\)--semimodules have the following tree structure:
	
	\begin{figure}[H]
		\begin{center}
			\begin{tikzpicture}[line cap=round,line join=round,>=triangle 45,x=1.0cm,y=1.0cm, scale=1.5]
				\clip(-3.,0.) rectangle (7.,5.);
				\draw [line width=1.pt] (2.5,3.5)-- (1.,2.5);
				\draw [line width=1.pt] (2.5,3.5)-- (2.,2.5);
				\draw [line width=1.pt] (2.5,3.5)-- (3.,2.5);
				\draw [line width=1.pt] (2.5,3.5)-- (4.,2.5);
				\draw [line width=1.pt] (3,2.5)-- (3.,1.5);
			 width=1.pt] (2.,2.5)-- (2.,2.);

				\begin{scriptsize}
					\draw [fill=black] (2.5,3.5) circle (2.pt);
					\draw [fill=black] (4.,2.5) circle (1.5pt);
					\draw [fill=black] (3,2.5) circle (1.5pt);
					\draw [fill=black] (2.,2.5) circle (1.5pt);
					\draw [fill=black] (1,2.5) circle (1.5pt);

					\draw [fill=black] (3.,1.5) circle (1.5pt);
				
					%\draw [fill=black] (2.5,2.) circle (1.5pt);
					%\draw [fill=black] (2.,2.) circle (1.5pt);
					%\draw [fill=black] (1.,2.) circle (1.5pt);
					%\draw [fill=black] (0.5,2.) circle (1.5pt);
					%\draw [fill=black] (1.5,2.) circle (1.5pt);
					
					\node [below right][black] at (5.,4.3) {\tiny Added generators};
					\node [below right][black] at (5.25,3.5) {$\leftarrow g_1=5$};
					\node [below right][black] at (5.25,2.5) {$\leftarrow g_2$};
					\node [below right][black] at (5.25,1.5) {$\leftarrow g_3$};

			\node [below right][black] at (2.35,4.) {$5$};
		\node [below right][black] at (0.8,2.35) {$10$};
			\node [below right][black] at (1.8,2.35) {$11$};
				\node [below right][black] at (2.65,2.35) {$15$};
					\node [below right][black] at (3.8,2.35) {$19$};

					\node [below right][black] at (-2.1,4.3) {$\Delta$};
					\node [below right][black] at (-2.4,3.5) {$[0,5]$};
					\node [below right][black] at (-2.4,2.5) {$[0,5,g_2]$};
					\node [below right][black] at (-2.4,1.5) {$[0,5,g_2,g_3]$};

				\end{scriptsize}
			\end{tikzpicture}
			\vspace{-0.3cm}
			\caption{Tree of increasing \(\Gamma\)-semimodules with first non zero generator \(g=5\).}
		\end{center}
	\end{figure}
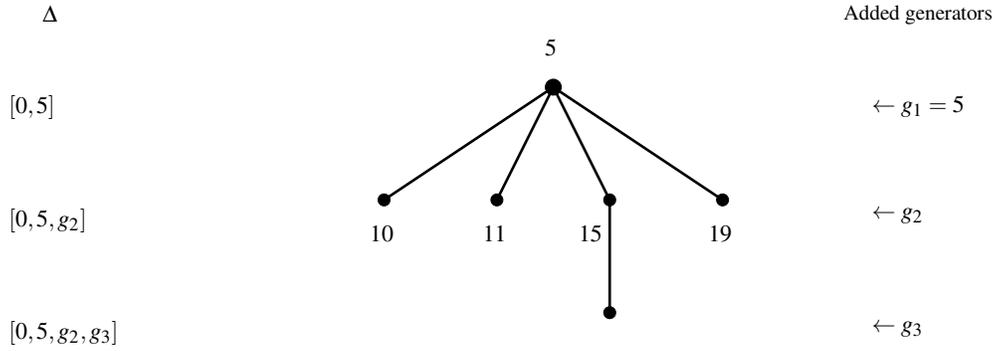
	
	According with Zariski \cite{zariski}, we can always consider a short Puiseux series of the form:
	\[y(t)=t^9+c_{10}t^{10}+c_{11}t^{11}+c_{15}t^{15}+c_{19}t^{19}.\]
	
	Our program \cite[Appendix A]{almtesis} computes all the associated conditions according to the proof of Theorem \ref{thm:existence-curve}. Those conditions read as follows:
	\begin{enumerate}
		\item [$\bullet$] Conditions for the K\"{a}hler semimodule \(\{0,5\}:\) \[c_{10}=c_{11}=c_{15}=c_{19}=0.\]
		
			\item [$\bullet$] Conditions for the K\"{a}hler semimodule \(\{0,5,10\}:\) \[c_{10}\neq 0\quad \text{and}\quad c_{11}=(19/18)c_{10}^2.\]
			
				\item [$\bullet$] Conditions for the K\"{a}hler semimodule \(\{0,5,11\}:\) \[c_{10}=0\quad \text{and}\quad c_{11}\neq 0.\]

\item [$\bullet$] Conditions for the K\"{a}hler semimodule \(\{0,5,15\}:\) \[c_{10}=c_{11}=0\quad\text{and}\quad c_{15}\neq 0.\]
					
\item [$\bullet$] Conditions for the K\"{a}hler semimodule \(\{0,5,19\}:\) \[c_{10}=c_{11}=c_{15}=0\quad \text{and}\quad c_{19}\neq 0.\]
						
		\item [$\bullet$] Conditions for the K\"{a}hler semimodule \(\{0,5,10,15\}:\) \[c_{10}\neq 0\quad \text{and}\quad c_{11}\neq (19/18)c_{10}^2.\]
		 Observe that this correspond with the generic component of the moduli space.
	\end{enumerate}
	
\end{ex}

\end{document}